\title{{Flips on homologous orientations of surface graphs with prescribed forbidden  facial circuits}
}
\author{{ Weijuan Zhang$^{1}$,
          Jianguo Qian$^{2}$\footnote{Corresponding author, email address: jgqian@xmu.edu.cn}}\\
\small  $^{1}$School of Mathematical Sciences, Xinjiang Normal University\\
\small  Urumqi, Xinjiang 830054, P.R.China\\
\small  $^{2}$School of Mathematical Sciences,  Xiamen University\\
\small  Xiamen, Fujian 361005, P.R.China}
\date{}
\begin{document}
\maketitle

\newtheorem{lem}{Lemma}[section]
\newtheorem{thm}[lem]{Theorem}
\newtheorem{prop}[lem]{Proposition}
\newtheorem{cor}[lem]{Corollary}
\newtheorem*{pf}{Proof}

\begin{abstract} Let $G$ be a graph embedded on an orientable surface. Given  a class ${\cal C}$ of facial circuits  of $G$ as a forbidden class, we give a sufficient-necessary condition for that an $\alpha$-orientation (orientation with prescribed out-degrees) of $G$  can be transformed into another by a sequence of flips on non-forbidden circuits and further give an explicit formula for the minimum number of such flips. We also consider the connection among all $\alpha$-orientations by defining a directed graph ${\bf D}({\cal C})$, namely the ${\cal C}$-forbidden flip graph. We show that if ${\cal C}\not=\emptyset$, then ${\bf D}({\cal C})$ has exactly $|O(G,{\cal C})|$ components, each of which is the cover graph of a distributive lattice, where $|O(G,{\cal C})|$ is the number of the $\alpha$-orientations that has no counterclockwise facial circuit other than that in ${\cal C}$. If ${\cal C}=\emptyset$, then every component of ${\bf D}({\cal C})$ is strongly connected. This generalizes the corresponding results of Felsner and Propp for the case that ${\cal C}$ consists of a single facial circuit.\\

\noindent\textbf{Keywords:} surface graph, orientation, flip, homology, forbidden  facial circuit
\end{abstract}
\section {\large Introduction}

Graph orientation with certain degree-constraints on vertices is a natural focus in graph theory and has close  connection with many combinatorial structures in graphs, such as the spanning trees \cite{Felsner}, primal-dual orientations \cite{Disser},  bipolar orientations \cite{Fraysseix2}, transversal structures \cite{Fusy},  Schnyder woods \cite{Goncalves}, bipartite perfect matchings (or more generally, bipartite $f$-factors) \cite{Kenyon,Lam,Propp} and $c$-orientations of the dual of a plane graph \cite{Knauer,Propp}. In general, all these constraints could be  encoded in terms of  $\alpha$-orientations \cite{Felsner}, that is, the orientations with prescribed out-degrees.

To deal with the relation among orientations, circuit (or cycle) reversal has been shown as a useful method since it preserves the out-degree of each vertex and the connectivity of the orientations. Various types of cycle reversals were introduced subject to certain requirements. Circuit reversal for the orientations of plane graphs or, more generally, {\it surface graphs} (graphs embedded on an orientable surface) received particular attention \cite{Felsner,Goncalves,Knauer,Lam,Nakamoto}. For example, Nakamoto \cite{Nakamoto}  considered the 3-cycle reversal to deal with those orientations in plane triangulations where each vertex on the outer facial cycle has out-degree 1 while each of the other vertices has out-degree 3. In \cite{ZhangF}, Zhang et al. introduced the Z-transformation to study the connection among perfect matchings in hexagonal systems, which was later extended to general plane bipartite graphs \cite{Zhang2}.

A natural considering  of circuit  reversal for orientations of a surface graph is to reverse a directed facial circuit (the circuit that bounds a face). However, an orientation of a surface graph does not always have such a directed facial circuit, even if it has many directed circuits. In \cite{Felsner}, Felsner introduced a type of circuit reversals, namely  the {\it flip}, defined on the essential cycles in plane graphs which reverses a counterclockwise essential cycle to be clockwise. In a strongly connected $\alpha$-orientation  of a plane graph,  every essential cycle is exactly an inner facial circuit. In this sense, the notion of essential cycle is a very nice generalization of that of facial circuit. Further, Felsner proved that any $\alpha$-orientation of a plane graph can be transformed into a particular $\alpha$-orientation by a flip sequence, and the set of all $\alpha$-orientations of the graph carries a distributive lattice by flips.

The notion of flip has been also extended to  general surface graphs  \cite{Goncalves,Propp} and oriented matroids \cite{Knauer}. Moreover, Propp proved the following result (where the notion of homology will be given in the next section).
\begin{thm}\cite{Propp} \label{Propp} The set of all $\alpha$-orientations of a surface graph $G$ in the same homology class carries a distributive lattice by a sequence of flips taking on the facial circuits of $G$ with a prescribed forbidden  facial circuit.
\end{thm}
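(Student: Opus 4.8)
The plan is to reduce the statement to the elementary fact that a system of integer \emph{difference constraints} has its solution set closed under coordinatewise minimum and maximum, and to funnel all of the topology into a single normalization. Fix once and for all a base $\alpha$-orientation $X_0$ in the prescribed homology class and let $f_0$ be the forbidden facial circuit. For any $\alpha$-orientation $X$ in the same class, the set of edges on which $X$ and $X_0$ disagree, oriented as in $X$, is balanced at every vertex because $X$ and $X_0$ share all out-degrees; hence it is an integer $1$-cycle $Z_X$ with coefficients in $\{-1,0,1\}$. The hypothesis that $X$ and $X_0$ lie in the same homology class says exactly that $Z_X$ is null-homologous, i.e. $Z_X=\partial c_X$ for some $2$-chain $c_X=\sum_{f}c_X(f)\,f$ on the faces.

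First I would make the potential $c_X$ canonical. Since the surface is connected and orientable, the only $2$-cycles are integer multiples of the fundamental class $\sum_f f$, whose boundary vanishes, so $c_X$ is determined by $Z_X$ only up to adding a global constant to every face; normalizing by $c_X(f_0)=0$ kills this freedom and yields a well-defined $c_X\colon F(G)\to\mathbb{Z}$. This is precisely where the forbidden facial circuit enters: freezing the value at $f_0$ is the combinatorial shadow of declaring $f_0$ un-flippable, and it is what upgrades $X\mapsto c_X$ from a bijection-up-to-shift to an honest bijection. Next I would turn admissibility of $X$ into local constraints: reading $Z_X(e)=c_X(f_L(e))-c_X(f_R(e))$ across each edge $e$ with bounding faces $f_L,f_R$, and using $Z_X(e)\in\{-1,0,1\}$ with the two allowed values selected by the direction of $e$ in $X_0$, one obtains for every edge a width-one constraint
\[
a_e \le c_X(f_L(e))-c_X(f_R(e)) \le a_e+1 .
\]
I would then check the converse, that every integer function vanishing on $f_0$ and satisfying all these inequalities comes from a unique $\alpha$-orientation of $G$ in the class.

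The lattice structure is then clean. The set $L$ of admissible potentials lies inside $\mathbb{Z}^{F(G)}$, and each width-one constraint splits into two inequalities of the form $c(u)-c(v)\le w$, which are preserved by $\min$ and $\max$: if $c,c'$ both satisfy $c(u)-c(v)\le w$ and $m=\min(c,c')$, then splitting on whether $m(v)=c(v)$ or $m(v)=c'(v)$ gives $m(u)-m(v)\le w$ in either case, and symmetrically for $\max$. Hence $L$ is a sublattice of the distributive lattice $\mathbb{Z}^{F(G)}$, so $L$ is itself distributive with meet and join computed coordinatewise. Finally I would identify a flip on the facial circuit $f$ with incrementing $c_X(f)$ by one while fixing every other coordinate: this move keeps $c_X$ inside $L$ exactly when $f$ is counterclockwise in $X$, so flips are precisely the covering relations of $L$, while $f_0$ is never flipped because its coordinate is frozen. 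Taking genus zero and $f_0$ the outer face recovers Felsner's planar theorem.

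The hard part will be Step two rather than the lattice closure: I expect the genuine work to be (i) making the homological normalization precise enough that $X\mapsto c_X$ is a bijection onto the integer points of the difference-constraint system, and (ii) verifying that each edge really contributes a single width-one difference constraint with the orientation of $X_0$ dictating which unit interval occurs, including the degenerate bookkeeping for parallel edges and loops. Once that bijection is secured, distributivity is the two-line $\min$/$\max$ argument above.
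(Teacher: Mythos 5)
Your normalization and encoding steps are sound, and in fact they coincide with the paper's own machinery: your potential $c_X$ is exactly the function $z_{D'-D}$ of Proposition \ref{potential}, and your observation that $2$-cycles are multiples of the fundamental class (so the potential is unique once $c_X(f_0)=0$) is Proposition \ref{coordinate}. The min/max closure of the width-one difference constraints is also correct and gives a genuinely more direct route to \emph{distributivity} than the paper's Section 4, which instead goes through U- and L-colorings and Proposition \ref{UL}. But there is a real gap, and it is not where you placed it. The items you flag as hard --- the bijection $X\mapsto c_X$ and the per-edge window bookkeeping --- are routine, as your own sketches essentially show. The crux is your one-line claim that ``flips are precisely the covering relations of $L$.'' The easy direction is that a unit increment of $c_X(f)$ staying in $L$ corresponds to a flip (your ccw criterion is right). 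The hard direction is the converse: that every cover in the coordinatewise lattice $L$ is a \emph{unit} increment, equivalently that whenever $c<c'$ in $L$ there is a single face $f$ with $c+e_f\in L$ and $c+e_f\le c'$. For a general sublattice of $\mathbb{Z}^{F(G)}$ this is simply false (covers can be diagonal steps), so it must be extracted from the specific structure here. This is exactly the content of Lemma \ref{base}, which the paper does not prove either but imports from Gon\c{c}alves et al., and whose proof requires the standing hypothesis that $\alpha$ is strongly connected together with a topological argument: one must exhibit, among the faces where the potential still has to rise, one whose \emph{entire} facial circuit is currently ccw, which is not a local consequence of the tight/slack pattern of individual constraints.

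Without that step, your argument only shows that the set of admissible potentials is a distributive lattice under the coordinatewise order; it does not show that this order is generated by flips, which is what the theorem asserts (``carries a distributive lattice \emph{by a sequence of flips}''). The flip-generated order could a priori be strictly coarser, with comparable potentials not flip-connected. To close the gap you would need to prove the unit-step (flippable-face) lemma, e.g.\ by considering the set of faces maximizing $c'-c$, showing via the window constraints that all edges on its topological boundary are coherently directed, and then using strong connectivity of the $\alpha$-orientation to locate a single face in that region whose whole facial circuit is directed --- precisely the argument pattern behind Lemma \ref{base} and behind the reachability criterion of Theorem \ref{main}.
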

We note that the above theorem is an extension of the one given by Felsner for plane graphs since a plane graph can be regarded as a sphere graph where the outer facial circuit is treated as the forbidden circuit.

 In this paper, in stead of a  prescribed forbidden facial circuit, we consider the flips on surface graph with a class of prescribed forbidden facial circuits ${\cal C}$, and call such flips the ${\cal C}$-{\it forbidden flips}. In the third section, we give a sufficient and necessary condition for that an $\alpha$-orientation  can be transformed into another by a sequence of ${\cal C}$-forbidden flips and, further, give an explicit formula for the minimum number of the flips needed in such a sequence. In particular, if ${\cal C}$ is empty then an $\alpha$-orientation  can be transformed into another by flips if and only if they are homologous. In our study, the idea of potential function \cite{Felsner}, or depth function \cite{Nakamoto}, plays an important role.

In the last section, we focus on the relationship among all $\alpha$-orientations in a surface graph $G$. To this end, given a class  ${\cal C}$ of forbidden facial circuits, we define  a directed graph, namely the ${\cal C}$-forbidden flip graph, whose vertex set is the class of all $\alpha$-orientations of $G$ and two orientations $D$ and $D'$ are joined by an edge with direction  from $D'$ to $D$ provided $D$ can be transformed from $D'$ by a ${\cal C}$-forbidden flip. We apply the technique of U-coloring and L-coloring introduced by Felsner and Knauer in \cite{Felsner3}, and show that the ${\cal C}$-forbidden flip graph admits both a U- and L-coloring for any class  ${\cal C}$. This yields a generalization of Theorem \ref{Propp}. More specifically, if ${\cal C}$ is not empty, then the flip graph has exactly $|O(G,{\cal C})|$ components, each of which is the cover graph of a distributive lattice, where $|O(G,{\cal C})|$ is the number of the $\alpha$-orientations that has no counterclockwise facial circuit other than that in ${\cal C}$. And if ${\cal C}$ is empty, then every non-trivial component of the flip graph is strongly connected, and hence cyclic.
\section{Preliminaries}
In this section we introduce some elementary concepts involving graphs and graph embeddings on surfaces. For a  graph $G$ (directed or undirected), we denote by $V(G)$ and $E(G)$ the vertex set and edge set of $G$, respectively. A {\it closed walk} in a graph is a sequence $v_1v_2\dotsc v_{k}$ such that $v_1$ and $v_{k}$ are adjacent and, for every $i\in\{1,2,\dotsc,k-1\}$, $v_i$ and $v_{i+1}$ are adjacent. A closed walk  is called a {\it cycle} if it is edge disjoint and a {\it circuit} if it is vertex disjoint. The {\it directed circuit} and {\it directed cycle} are defined analogously. An {\it orientation} of a graph $G$ is an assignment of a direction to each edge of $G$. Given  a graph $G$ with $n$ vertices $v_1,v_2,\dotsc,v_n$ and an out-degree sequence $\alpha=(\alpha(v_1),\alpha (v_2),\dotsc, \alpha (v_n))$, an orientation $D$  of $G$ is called an $\alpha$-{\it orientation} \cite{Felsner} if  $d^+_D(v_i)=\alpha (v_i)$ for all $v_i\in V(G)$, where $d^+_D(v_i)$ is the out-degree of $v_i$ in $D$.  It is well known that if an $\alpha$-orientation of $G$ is strongly connected, then every $\alpha$-orientation of $G$ is strongly connected. In this case, we call $\alpha$ {\it strongly connected}. It is also known that each component obtained from an $\alpha$-orientation by deleting all those edges that have the same direction in every $\alpha$-orientation of $G$  (i.e., the {\it rigid edges} \cite{Felsner,Goncalves}) is strongly connected \cite{Knauer}. So in this paper, we always assume that $\alpha$ is  strongly connected and, hence, $G$ is 2-edge connected.

 A surface graph  is  a graph $G$ embedded on an orientable surface $\Sigma$ without boundary.  The connected components of $\Sigma\setminus G$, when viewed as subsets of $\Sigma$, are called the faces of $G$. A closed walk that bounds a face is called a {\it facial walk}.  A surface graph is called a {\it 2-cell embedding} \cite{Hutchinsonon} or {\it map} \cite{Goncalves} if all its faces are homeomorphic to open disks. A cycle $C$ on a surface $\Sigma$ is {\it separating} if $\Sigma\setminus C$ is disconnected and is  {\it contractible} \cite{Hutchinsonon} if $C$ can be continuously transformed into a single point. We note that if a cycle is contractible, then it is separating and, moreover, one component of  $\Sigma\setminus C$ is homeomorphic to an open disc. In this paper, except where otherwise stated, we always assume that all the surface graphs are 2-cell embedded and every edge is on the boundary of two distinct faces. We note that, in such an embedding, a facial walk is exactly a facial circuit and, therefore, every edge is shared by two distinct facial circuits.  Thus, our embedding is stronger than 2-cell embedding, but weaker than the one in which each facial circuit is contractible.

A directed facial circuit $F$ is called {\it counterclockwise} (resp., {\it clockwise}), or ccw (resp., cw) for short, if  when we trace along with the orientation of $F$, the face bounded by $F$ lies to the left (resp., right) side of $F$.  A {\it flip} taking on a  ccw facial circuit $F$ is the reorientation of $F$ that reverses $F$ from ccw to cw  \cite{Felsner}. The notion of flip was introduced  initially for the essential circuits.  As mentioned earlier, in a strongly connected $\alpha$-orientation of $G$, every essential circuit is  a facial circuit \cite{Knauer}.

 Let $G$ be a surface graph with edge set $E$ of $m$ edges and let $D_0$ be a fixed orientation of $G$. Consider the $m$-dimensional edge space $\mathbb{Z}^{|E|}$ of $G$. It is clear that any oriented subgraph $D$ of $G$ can be represented as an $m$-dimensional vector $\phi(D)\in\mathbb{Z}^{|E|}$ whose  coordinate indexed by an edge $e$ is defined by
\[\phi(D)_e=
\begin{cases}
 \ \ \ 1,& \text{if\ $e$\ has\ the\ same\ orientation\ in\ $D_0$\ and\ $D$},\\
 \ -1,& \text{if\ $e$\ has\ the\ opposite\ orientation\ in\ $D_0$\ and\ $D$},\\
 \ \ \  0,& \text{if\ $e$\ is\ not\ in\ $D$}.
\end{cases}
\]
Moreover, all oriented {\it even subgraphs} (the in-degree of each vertex equals its out-degree) of $G$ form an  $(m-n+1)$-dimensional subspace of $\mathbb{Z}^{|E|}$, called the (directed) {\it cycle space} of $G$, where $n$ is the number of vertices in $G$. 

For a simple graph $G$, a basis of the cycle space can be created by the elementary cycles associated with a given spanning tree of $G$. For a graph (not necessarily simple) embedded on a surface, it would be more convenient to create a basis of the cycle space by the facial circuits in ${\cal F}\setminus\{F_0\}$ and a so-called {\it basis for the homology}, where and herein after,  ${\cal F}$ denotes the set of all facial circuits  of $G$ and $F_0$ is an arbitrary facial circuit in ${\cal F}$. A common approach to create a basis for the homology  is to choose a spanning tree $T$ in $G$ and a spanning tree $T^*$ in the dual graph $G^*$ of $G$ that contains no edge dual to $T$. By Euler's formula, there are exactly $2g$ edges in $G$ that are neither in $T$ nor dual to the edges in $T^*$, where $g$ is the genus of the surface. Each of these $2g$ edges forms a unique circuit with $T$ and all these $2g$ circuits form a basis for the homology \cite{Goncalves},  denoted by ${\cal H}$. In a word, any oriented even subgraph $D$ of $G$ can be uniquely represented as a linear combination of the facial circuits in ${\cal F}\setminus\{F_0\}$ and circuits in ${\cal H}$, i.e.,
\begin{equation}\label{basis}
\phi(D)=\sum_{F\in{\cal F}\setminus\{F_0\}}\lambda_F \phi(F)+\sum_{H\in{\cal H}}\mu_H \phi(H).
\end{equation}
 In the following we always choose the orientation of each facial circuit in ${\cal F}$  to be ccw. Let $\mathbb{F}$ be the subspace of $\mathbb{Z}^{|E|}$ generated by ${\cal F}$. An oriented even subgraph $D$ is called {\it null-homologous} or {\it 0-homologous} \cite{Goncalves} if  $\phi(D)\in \mathbb{F}$, i.e.,
\begin{equation}\label{ccw}
\phi(D)=\sum_{F\in{\cal F}}\lambda_F \phi(F).
\end{equation}

 For two $\alpha$-orientations $D$ and $D'$, we denote by $D'-D$ the directed subgraph obtained from $D'$ by removing those edges that has the same direction as that in $D$. It is clear that $D'- D$ is oriented even graph.  Further, if $\phi(D'-D)\in\mathbb{F}$ then  $D$ and $D'$ are called {\it homologous} \cite{Goncalves}. Equivalently, $D$ and $D'$ are homologous if and only if their corresponding coefficients $\mu_H$ in (\ref{basis}) are the same for every $H\in{\cal H}$. It is clear that homology is an equivalence relation on $\alpha$-orientations.

\noindent{\bf Example 1}. Let $G$ be the Cartesian product graph  of two circuits of length 2 embedded on torus and $\alpha=(2,2,2,2)$. For convenience, we draw $G$ in the form as indicated in Figure 1 (left), where the left side and right side (resp., lower side and upper side)  are identified. By the method we introduced earlier, we can see that ${\cal H}=\{H_{\rm M},H_{\rm P}\}$  is a basis for the homology, where $H_{\rm M}$ and  $H_{\rm P}$ are the two directed circuits as illustrated in  Figure 1 (right).
\begin{center}
\scalebox{1.1}{\includegraphics{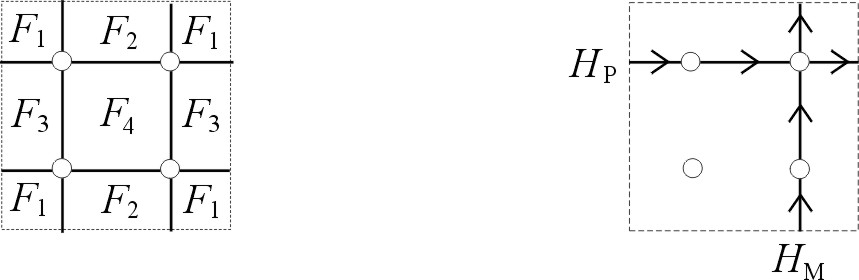}}\\
{\textbf{Figure\ 1}. The  edges of $G$ are represented in thick lines; the four faces are labeled by $F_1,F_2,F_3,F_4$.}
\end{center}
We note that, since every vertex has out-degree 2, the directions of the four edges on $F_1$ are determined uniquely by that  on $F_4$, unless  $F_4$ is a directed circuit (in this case $F_1$ must be directed and, therefore, has exactly two choices). This means that  $G$ has totally 18 different $(2,2,2,2)$-orientations $D_1,D_2,$ $\dotsc,D_{18}$, as illustrated in Figure 2. Since $D_{1}-D_{2}=H_{\rm M}$, we have $\phi(D_{1}-D_{2})=\phi(H_{\rm M})$, meaning that $D_{1}$ and $D_{2}$ are not homologous. Further, since $D_{13}-D_{14}=D_{13}$ and $\phi(D_{13})=\phi(F_1)+\phi(F_4)$, $D_{13}$ and $D_{14}$ are homologous.  In general, it can be seen that any two orientations $D_i$ and $D_j$ are homologous if and only if $i=j$ or $i,j\in\{13,14,\dotsc,18\}$. In other words, the $(2,2,2,2)$-orientations of $G$ form 13 homologous classes: $\{D_{1}\},\{D_2\},\dotsc,\{D_{12}\}$ and $\{D_{13},D_{14},\dotsc,D_{18}\}$.
\begin{center}
\scalebox{0.84}{\includegraphics{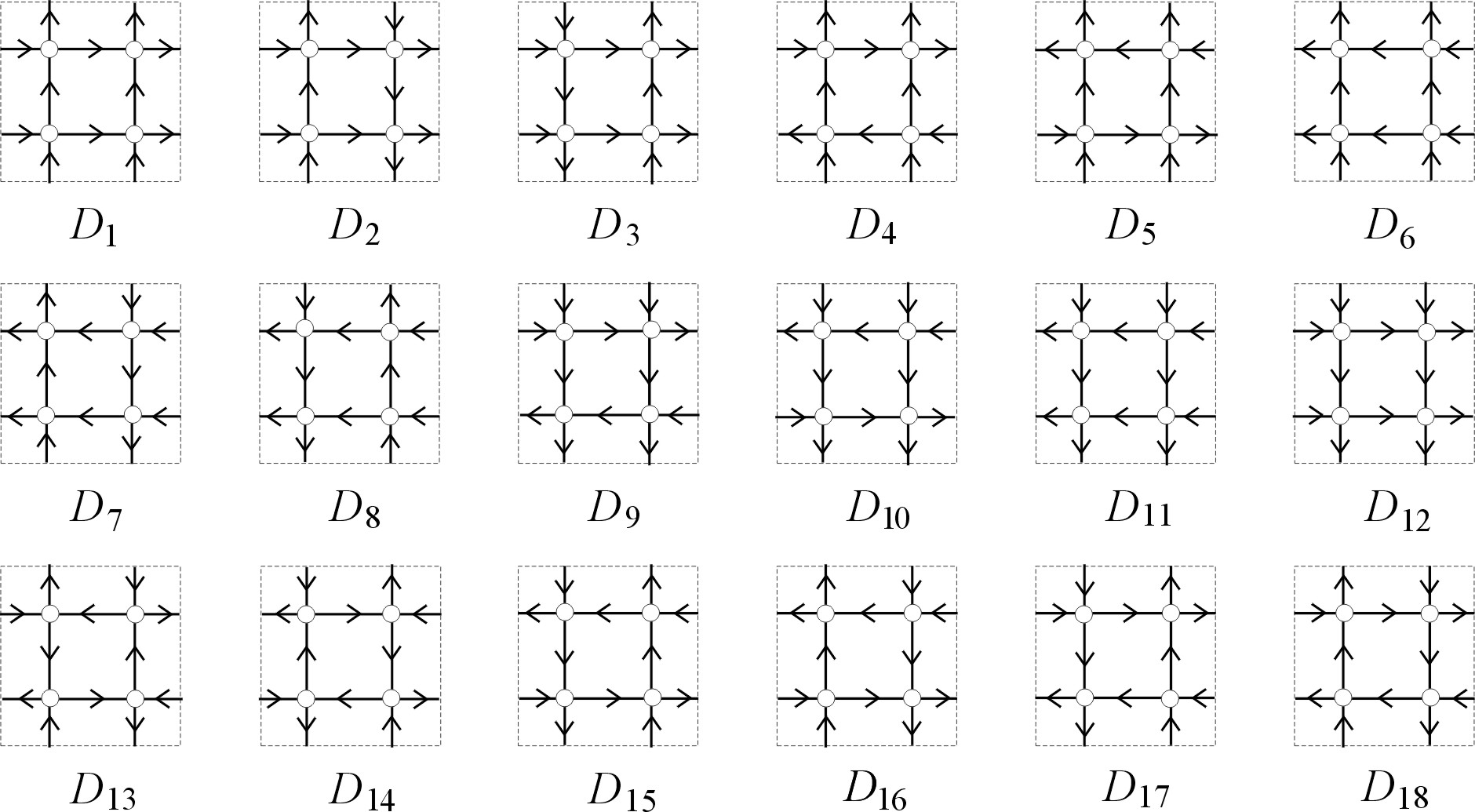}}\\
{\textbf{Figure\ 2}. }
\end{center}
\section{Transform an $\alpha$-orientation into another by flips on non-forbidden  facial circuits}
\begin{prop}\label{coordinate} Let $G$ be a surface graph  and, for each $F\in{\cal F}$, let $\lambda_F$ be an integer associated with $F$. Then for any integer $k$,
\begin{equation}
\sum_{F\in\mathcal{F}}\lambda_F \phi(F)=\sum_{F\in\mathcal{F}}(\lambda_F+k) \phi(F).
\end{equation}

Conversely, for each $F\in{\cal F}$, if  $\eta_F$ is an integer associated with $F$ such that
\begin{equation}\label{conv}
\sum_{F \in \mathcal{F}} \eta_F \phi(F) = \sum_{F \in\mathcal{F}} \lambda_F \phi(F),
\end{equation}
then there exists an integer $k$ such that $\eta_F = \lambda_F + k$ for every $F \in \mathcal{F}$.
\end{prop}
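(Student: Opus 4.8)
The plan is to reduce both assertions to the single key identity
\begin{equation}\label{keyid}
\sum_{F\in\mathcal{F}}\phi(F)=\mathbf{0}.
\end{equation}
To establish (\ref{keyid}) I would argue coordinate by coordinate. Fix an edge $e$. By the standing assumption, $e$ lies on the boundary of exactly two distinct facial circuits, say $F'$ and $F''$, so that $\phi(F)_e=0$ for every other $F\in\mathcal{F}$. Since $\Sigma$ is orientable and every facial circuit is oriented ccw with respect to a fixed orientation of $\Sigma$, the two boundary traversals of $e$ arising from $F'$ and $F''$ run in opposite directions along $e$; relative to the reference orientation of $e$ in $D_0$ this means that $\phi(F')_e$ and $\phi(F'')_e$ equal $+1$ and $-1$ in some order, whence $\phi(F')_e+\phi(F'')_e=0$. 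Summing over all $F$ gives $\bigl(\sum_{F\in\mathcal{F}}\phi(F)\bigr)_e=0$, and as $e$ was arbitrary, (\ref{keyid}) follows. Equivalently, (\ref{keyid}) expresses that the sum of the consistently oriented face boundaries is the zero $1$-chain on a closed orientable surface.

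The forward identity is then immediate: for any integer $k$,
\[\sum_{F\in\mathcal{F}}(\lambda_F+k)\phi(F)=\sum_{F\in\mathcal{F}}\lambda_F\phi(F)+k\sum_{F\in\mathcal{F}}\phi(F)=\sum_{F\in\mathcal{F}}\lambda_F\phi(F).\]

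For the converse, set $c_F=\eta_F-\lambda_F$, so that (\ref{conv}) becomes $\sum_{F\in\mathcal{F}}c_F\phi(F)=\mathbf{0}$; it then suffices to show that all the $c_F$ coincide. Here I would invoke the basis construction recalled in Section~2: the vectors $\{\phi(F):F\in\mathcal{F}\setminus\{F_0\}\}$ form part of a basis of the cycle space and are therefore linearly independent. Using (\ref{keyid}) to eliminate the term indexed by $F_0$ via $\phi(F_0)=-\sum_{F\neq F_0}\phi(F)$, I rewrite the relation as
\[\mathbf{0}=\sum_{F\in\mathcal{F}}c_F\phi(F)=\sum_{F\neq F_0}c_F\phi(F)-c_{F_0}\sum_{F\neq F_0}\phi(F)=\sum_{F\neq F_0}(c_F-c_{F_0})\phi(F).\]
Linear independence of $\{\phi(F):F\neq F_0\}$ forces $c_F-c_{F_0}=0$ for every $F\neq F_0$, i.e. $c_F=c_{F_0}=:k$ for all $F$. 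Hence $\eta_F=\lambda_F+k$ for every $F\in\mathcal{F}$, as required.

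The only genuinely non-routine point is the identity (\ref{keyid}), and within it the claim that the two facial traversals of each edge are oppositely directed; this is precisely where the orientability of $\Sigma$, the convention of orienting every facial circuit ccw, and the assumption that every edge bounds two distinct faces enter, and it is the combinatorial analogue of the fact that a closed orientable surface has empty boundary. Everything after that is linear algebra resting on the linear independence of the $|\mathcal{F}|-1$ facial vectors already asserted in the preliminaries.
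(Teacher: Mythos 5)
Your proof is correct, and the converse is argued along a genuinely different route from the paper's. For the forward identity you rely on exactly the same local fact as the paper --- each edge lies on two distinct ccw facial circuits whose traversals of it are opposite, so only those two faces contribute at that coordinate and their contributions cancel --- but you package it globally as the single identity $\sum_{F\in\mathcal{F}}\phi(F)=\mathbf{0}$, from which the forward direction is a one-line computation. For the converse the paper stays edge-local: comparing the coordinates of the hypothesis $\sum_F \eta_F\phi(F)=\sum_F\lambda_F\phi(F)$ at an edge $e$ shared by faces $F$ and $F'$, and using $\phi(F)_e=-\phi(F')_e\neq 0$, it deduces $\eta_F-\lambda_F=\eta_{F'}-\lambda_{F'}$, and then propagates this common difference to all faces via the connectedness of the dual graph. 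You instead set $c_F=\eta_F-\lambda_F$, eliminate $\phi(F_0)$ using your key identity, and invoke the linear independence of $\{\phi(F):F\in\mathcal{F}\setminus\{F_0\}\}$, which the paper does assert (the uniqueness of the representation in (\ref{basis}), cited from Gon\c{c}alves et al.) in Section~2, so there is no gap and no circularity. The trade-off: your argument is shorter and more conceptual --- it identifies the kernel of $(\lambda_F)_{F}\mapsto\sum_F\lambda_F\phi(F)$ as exactly the span of the all-ones vector --- but it leans on the cited basis fact, whereas the paper's proof is entirely self-contained, needing only the standing assumption that every edge separates two distinct faces together with dual-graph connectivity; note that dual connectivity is in fact the same ingredient from which the linear independence you use is usually derived, so the two proofs ultimately rest on the same structural property of the embedding.
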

\begin{proof} Consider an edge $e$ of $G$. By our assumption, $e$ is shared by two distinct facial circuits, say, $F$ and $F'$. Recall that every facial circuit in ${\cal F}$, when viewed as a basis circuit, is ccw. Therefore, $e$ has opposite orientations in $F$ and $F'$. Thus, $k\phi(F)_e+k\phi(F')_e=0$ and $\phi(K)_e=0$ for any $K\in{\cal F}$ other than $F$ and $F'$. Thus, (3) follows directly.

Conversely, let $e$ be an arbitrary edge and let $F$ and $F'$ be the two facial circuits that share $e$. Then by (\ref{conv}),
\begin{equation}\label{mulambda}
\mu_F \phi(F)_e+\eta_{F'} \phi(F')_e=\lambda_F \phi(F)_e+\lambda_{F'} \phi(F')_e.
\end{equation}
Further, we assume that  $\eta_F = \lambda_F + k$ and  $\eta_{F'} = \lambda_{F'} + k'$. Then,
 $$\eta_F \phi(F)_e+\eta_{F'} \phi(F')_e=(\lambda_F+k)\phi(F)_e+(\lambda_{F'}+k')\phi(F')_e$$
 $$=(\lambda_F \phi(F)_e+\lambda_{F'} \phi(F')_e)+(k\phi(F)_e+k'\phi(F')_e.$$
So by (\ref{mulambda}), we have $k\phi(F)_e+k'\phi(F')_e=0$. Recall that both $F$ and $F'$ are ccw. Therefore, $\phi(F)_e=-\phi(F')_e$ and, hence, $k=k'$. Further, notice that the dual graph of $G$ is connected, meaning that $\eta_F = \lambda_F + k$ for all $F \in \mathcal{F}$.
 \end{proof}

Let  $D$ and  $D'$ be two homologous $\alpha$-orientations  of a surface graph $G$. Following the idea of potential function \cite{Felsner} (or depth function \cite{Nakamoto}), we introduce an intuitive representation of $\phi(D'-D)$ as follows: define
$$z_{D'-D}:{\cal F}\rightarrow \mathbf{Z}$$
to be the function which assigns an integer to each facial circuit  $F$ of $G$ according to the following rule, see Figure 1:\\
1.\ $z_{D'-D}(F_0)=0$;\\
2.\ if $F$ and $F'$ share a common edge $e$ then
\begin{equation}\label{pt}
z_{D'-D}(F)=
\begin{cases}
 \ z_{D'-D}(F')+1,& \text{if\ $e\in D'-D$\ and\ $F$\ lies\ to\ the\ left\ of\ $e$},\\
 \ z_{D'-D}(F')-1,& \text{if\ $e\in D'-D$\ and\ $F'$\ lies\ to\ the\ left\ of\ $e$},\\
 \ z_{D'-D}(F'),& \text{if\ $e\notin D'-D$},\\
\end{cases}
\end{equation}
where, by `$F$ lies to the left of $e$'  we mean that $F$ lies to the left side of $e$ when we trace along with the direction of $e$.

\begin{center}
\scalebox{0.45}{\includegraphics{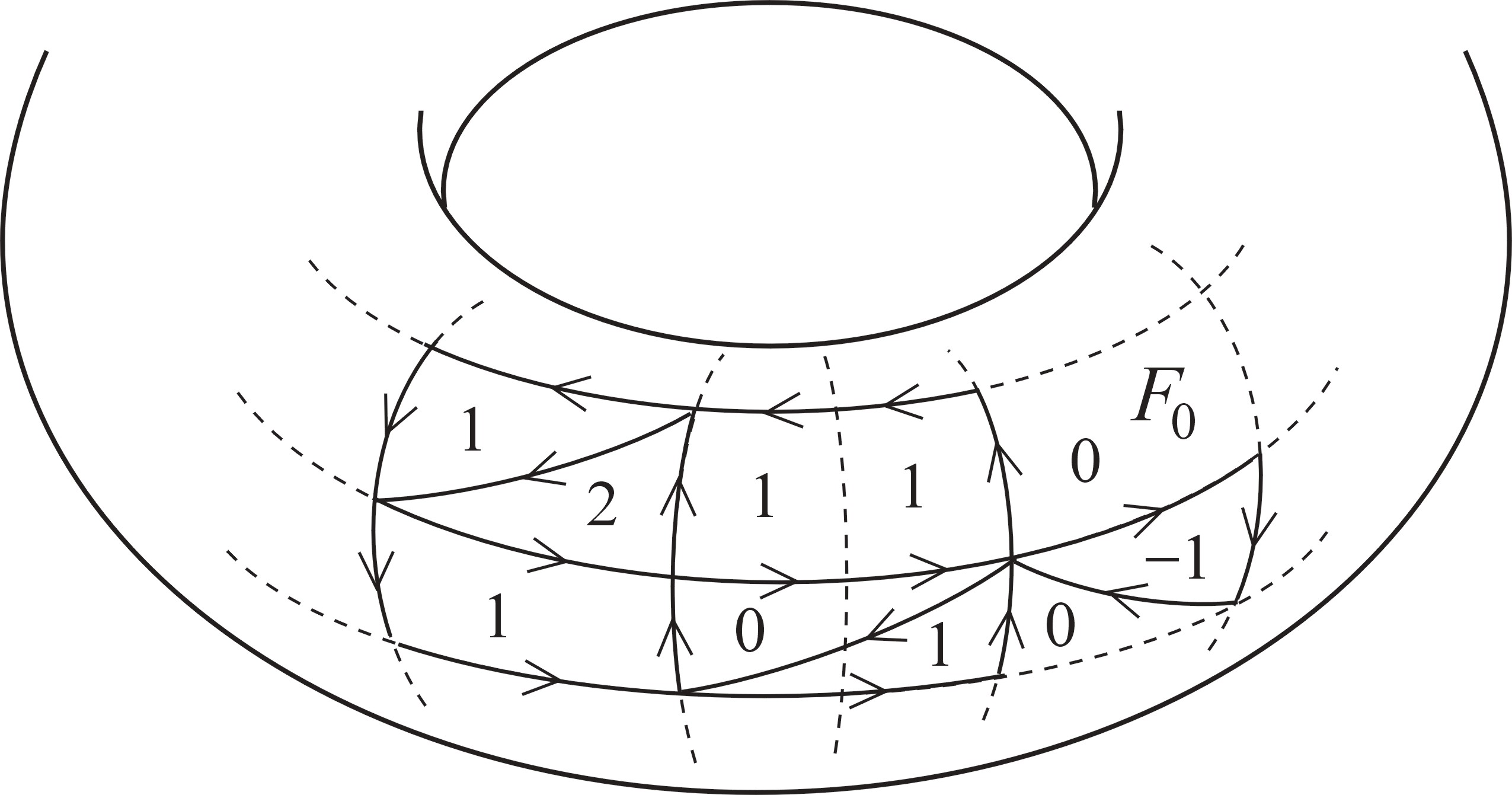}}\\
{\textbf{Figure\ 3}. The edges in $D'-D$ are represented in thick lines and  edges not in $D'-D$ are represented in dotted lines.}
\end{center}

\begin{prop}\label{potential} For any two homologous $\alpha$-orientations $D$ and $D'$,  $z_{D'-D}(F)$ is well defined and
\begin{equation*}
\phi(D'-D)=\sum_{F\in\mathcal{F}}z_{D'-D}(F) \phi(F).
\end{equation*}
\end{prop}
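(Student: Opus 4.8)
The plan is to reduce both assertions to the homology hypothesis together with the single sign identity $\phi(F)_e=-\phi(F')_e$ that already drove the proof of Proposition~\ref{coordinate}. Since $D$ and $D'$ are homologous, $\phi(D'-D)\in\mathbb F$, so I may fix integers $c_F$ ($F\in\mathcal F$) with $\phi(D'-D)=\sum_{F\in\mathcal F}c_F\,\phi(F)$. These coefficients are not unique, but by Proposition~\ref{coordinate} any two choices differ by a global additive constant, so the differences $c_F-c_{F'}$, and in particular the quantities $c_F-c_{F_0}$, are well-defined integers depending only on $D'-D$. This is the object I expect to coincide with $z_{D'-D}$.

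First I would record the local computation. Fix an edge $e$, shared by exactly the two faces bounded by $F$ and $F'$; since both are oriented ccw we have $\phi(F)_e=-\phi(F')_e\neq 0$. Evaluating the expansion of $\phi(D'-D)$ at the coordinate $e$ then gives
\begin{equation*}
\phi(D'-D)_e=c_F\,\phi(F)_e+c_{F'}\,\phi(F')_e=(c_F-c_{F'})\,\phi(F)_e .
\end{equation*}
The heart of the argument is to match this with the three cases of the defining rule (\ref{pt}). The point is that, because $F$ is ccw, the condition ``$F$ lies to the left of $e$'' (for the direction of $e$ in $D'-D$) is equivalent to saying that the ccw traversal of $e$ by $F$ agrees with its direction in $D'-D$, i.e.\ $\phi(F)_e=\phi(D'-D)_e$; symmetrically ``$F'$ lies to the left'' means $\phi(F)_e=-\phi(D'-D)_e$, while $e\notin D'-D$ means $\phi(D'-D)_e=0$. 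Substituting each of these into the displayed identity forces $c_F-c_{F'}=+1,\,-1,\,0$ respectively, which is exactly the increment $z_{D'-D}(F)-z_{D'-D}(F')$ prescribed by (\ref{pt}) in the three cases.

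This computation shows that the candidate function $\tilde z(F):=c_F-c_{F_0}$ satisfies the recurrence (\ref{pt}) across every pair of adjacent faces and has $\tilde z(F_0)=0$. Because the dual graph of $G$ is connected, the existence of a single function realizing all the prescribed adjacent-face differences is precisely the statement that no closed dual walk produces a contradictory increment; thus the exhibited solution $\tilde z$ certifies that $z_{D'-D}$ is well defined, and the fixed value at $F_0$ forces $z_{D'-D}=\tilde z$. Finally, for the formula I would expand
\begin{equation*}
\sum_{F\in\mathcal F}z_{D'-D}(F)\,\phi(F)=\sum_{F\in\mathcal F}(c_F-c_{F_0})\,\phi(F)=\sum_{F\in\mathcal F}c_F\,\phi(F)-c_{F_0}\sum_{F\in\mathcal F}\phi(F),
\end{equation*}
and use that $\sum_{F\in\mathcal F}\phi(F)=0$, since each edge lies on exactly two ccw faces whose contributions cancel (the same cancellation used in Proposition~\ref{coordinate}); the right-hand side collapses to $\sum_{F}c_F\,\phi(F)=\phi(D'-D)$. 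The main obstacle I anticipate is purely the orientation bookkeeping in the middle step: correctly translating the geometric ``left of $e$'' conditions of (\ref{pt}) into the algebraic signs of $\phi(F)_e$ and $\phi(D'-D)_e$, since this is where the ccw convention and the reference orientation $D_0$ must be reconciled.
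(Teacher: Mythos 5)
Your proof is correct and takes essentially the same route as the paper: expand $\phi(D'-D)$ in the facial basis, check edge-by-edge that the coefficient differences $c_F-c_{F'}$ reproduce the increments of rule (\ref{pt}), invoke connectivity of the dual graph for well-definedness, and normalize at $F_0$. Your two small repackagings --- encoding ``$F$ lies to the left of $e$'' as $\phi(F)_e=\phi(D'-D)_e$ instead of the paper's two-case analysis on $\phi(D'-D)_e=\pm1$, and using $\sum_{F\in\mathcal F}\phi(F)=0$ directly rather than citing Proposition~\ref{coordinate} --- are equivalent in substance, and in fact your order of argument (first exhibit $\tilde z$, then conclude $z$ exists and equals it) states the well-definedness step slightly more cleanly than the paper does.
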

\begin{proof}Since $D$ and  $D'$ are homologous, we have $\phi(D'-D)=\sum_{F\in{\cal F}}\lambda_F \phi(F)$. Let $e$ be an arbitrary edge of $G$, and let $F$ and $F'$ be the two facial circuits that share $e$. Then $\phi(D'-D)_e=\lambda_F \phi(F)_e+\lambda_{F'} \phi(F')_e$.

We first assume that the direction of $e$ in $D'-D$ coincides with that in our initially fixed orientation $D_0$, i.e.,  $\phi(D'-D)_e=\phi(D_0)_e=1$. In this case, we have $\phi(F)_e=1$ and  $\phi(F')_e=-1$ if $F$ lies to the left of $e$, and $\phi(F)_e=-1$ and  $\phi(F')_e=1$ if $F$ lies to the right of $e$. Therefore,
\begin{equation}\label{phi}
\lambda_F=
\begin{cases}
 \ \lambda_{F'}+1,& \text{if\ $e\in D'-D$\ and\ $F$\ lies\ to\ the\ left\ of\ $e$},\\
 \ \lambda_{F'}-1,& \text{if\ $e\in D'-D$\ and\ $F'$\ lies\ to\ the\ left\ of\ $e$},\\
 \ \lambda_{F'},& \text{if\ $e\notin D'-D$}.\\
\end{cases}
\end{equation}

We now assume that $\phi(D'-D)_e=-\phi(D_0)_e=-1$. In this case we have $\phi(F)_e=-1$ and  $\phi(F')_e=1$ if $F$ lies to the left of $e$, and $\phi(F)_e=1$ and  $\phi(F')_e=-1$ if $F$ lies to the right of $e$. This means that (\ref{phi}) still holds.

Combining (\ref{phi}) with (\ref{pt}), we have
 \begin{equation}\label{well}
\lambda_F-\lambda_{F'}=z_{D'-D}(F)-z_{D'-D}(F')
\end{equation}
and, hence, $\lambda_F-z_{D'-D}(F)=\lambda_{F'}-z_{D'-D}(F')$. Since the dual graph of $G$ is connected, we have
\begin{equation}\label{cp}
\lambda_F-z_{D'-D}(F)=\lambda_{K}-z_{D'-D}(K)
\end{equation}
 for any $K\in {\cal F}$. In particular, $\lambda_F-z_{D'-D}(F)=\lambda_{F_0}-z_{D'-D}(F_0)=\lambda_{F_0}$, i.e., $z_{D'-D}(F)=\lambda_F-\lambda_{F_0}$. Therefore, $z_{D'-D}(F)$ is well-defined. Further, the proposition follows directly from (\ref{cp}) and Proposition \ref{coordinate}.
\end{proof}
\begin{lem}\label{base}  \cite{Goncalves} Let $D$ and $D'$ be two $\alpha$-orientations of a  surface graph and let  $F_0$ be an arbitrary facial circuit. If
\begin{equation*}
\phi(D'-D)=\sum_{F\in\mathcal{F}\setminus\{F_0\}}\lambda_F \phi(F)
\end{equation*}
and $\lambda_F\geq 0$ for all $F\in\mathcal{F}\setminus\{F_0\}$, then $D$ can be transformed from $D'$ by a sequence of $\{F_0\}$-forbidden flips.
\end{lem}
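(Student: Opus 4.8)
The plan is to first turn the algebraic hypothesis into a statement about the potential function, and then to induct on the total potential, removing one flippable facial circuit at a time. For the normalisation, note that Proposition~\ref{potential} gives $\phi(D'-D)=\sum_{F\in\mathcal F}z_{D'-D}(F)\phi(F)$ with $z_{D'-D}(F_0)=0$, while the hypothesis supplies a second representation $\phi(D'-D)=\sum_{F\in\mathcal F}\lambda_F\phi(F)$ once we set $\lambda_{F_0}:=0$. By the converse half of Proposition~\ref{coordinate} these two representations differ by a global constant $k$, so $z_{D'-D}(F)=\lambda_F+k$ for all $F$; evaluating at $F_0$ forces $k=0$. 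Hence $z_{D'-D}(F)=\lambda_F\ge 0$ for every $F$ and $z_{D'-D}(F_0)=0$, so the whole problem is recast as: the potential is nonnegative and vanishes on $F_0$.

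I would then induct on $N:=\sum_{F}z_{D'-D}(F)\ge 0$. If $N=0$ then $\phi(D'-D)=0$, so $D'=D$ and nothing is to be done. Otherwise, suppose we have located a facial circuit $F^{*}\neq F_0$ with $z_{D'-D}(F^{*})\ge 1$ that is ccw in $D'$, and let $D''$ arise from $D'$ by the $\{F_0\}$-forbidden flip on $F^{*}$. Since $F^{*}$ is ccw in $D'$, a short computation gives $\phi(D'')=\phi(D')-2\phi(F^{*})$, and therefore $\phi(D''-D)=\phi(D'-D)-\phi(F^{*})$. By the uniqueness in Proposition~\ref{coordinate}, normalised at $F_0$ (which is untouched because $F^{*}\neq F_0$), this forces $z_{D''-D}(F)=z_{D'-D}(F)$ for $F\neq F^{*}$ and $z_{D''-D}(F^{*})=z_{D'-D}(F^{*})-1$. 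Thus the potential stays nonnegative, still vanishes on $F_0$, and $N$ drops by exactly $1$; applying the induction hypothesis to $D''$ yields a $\{F_0\}$-forbidden flip sequence from $D''$ to $D$, which, prefixed by the flip $D'\to D''$, transforms $D'$ into $D$.

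It remains to produce, whenever $N>0$, a ccw facial circuit $F^{*}\neq F_0$ of positive potential. I would work at the top level $M=\max_{F}z_{D'-D}(F)\ge 1$ and take a connected component $C$ of the region $R_M=\{F:z_{D'-D}(F)=M\}$. Because $z_{D'-D}(F_0)=0<M$ we have $F_0\notin R_M$, so on a connected surface $C$ has nonempty boundary; by the defining rule (\ref{pt}) every boundary edge of $C$ carries the higher side $C$ on its left, so $\partial C$ is a union of directed ccw cycles of $D'$. Every face of $C$ has potential $M\ge 1$ and is distinct from $F_0$, so it suffices to exhibit one face of $C$ that is ccw in $D'$.

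The main obstacle lies precisely here. An interior face of $C$ shares edges with other $C$-faces, and on those edges $D'$ agrees with $D$; such edges are not controlled by the potential, so the face need not be ccw, and a pure counting identity such as $\sum_{f\in C}\mathrm{cw}(f)=\#\{\text{interior edges of }C\}$ is inconclusive once $C$ has interior vertices. This is exactly where the standing assumption that $\alpha$ is strongly connected must be used. The plan is to argue by contradiction: if no face of $C$ were ccw, then from each face of $C$ one could cross a cw edge --- necessarily an interior edge, since boundary edges are ccw --- into another $C$-face, generating a closed dual walk that crosses every edge from its right face to its left face; such a walk yields a directed edge cut of $D'$, contradicting strong connectivity. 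The delicate point, which I expect to be the hardest to pin down cleanly, is the topology of this dual walk on a surface of positive genus (separating versus non-separating), and it is handled by keeping the walk confined to the bounded region $C$ whose boundary $\partial C$ already consists of directed ccw cycles. That strong connectivity is genuinely needed is seen from a star-triangulated plateau with all spokes pointing outward, which carries no ccw face at all but fails to be strongly connected.
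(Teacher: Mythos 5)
The paper itself offers no proof of Lemma~\ref{base} --- it is quoted from \cite{Goncalves} --- so your attempt can only be judged on its own terms. Its first two stages are sound and match the paper's potential-function toolkit: normalising via the converse part of Proposition~\ref{coordinate} together with Proposition~\ref{potential} correctly yields $z_{D'-D}=\lambda\ge 0$ with $z_{D'-D}(F_0)=0$, and the flip bookkeeping ($\phi(D''-D)=\phi(D'-D)-\phi(F^*)$, hence the potential drops by exactly $1$ at $F^*$ and is unchanged elsewhere, with $k=0$ forced by evaluation at $F_0$) is correct, so the induction on $N=\sum_F z_{D'-D}(F)$ would go through \emph{if} one can always produce a ccw facial circuit $F^*\ne F_0$ with positive potential. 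That existence statement is the entire content of the lemma, and it is exactly where your argument has a genuine gap.

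The contradiction you propose --- ``a closed dual walk crossing every edge from its right face to its left face yields a directed edge cut of $D'$, contradicting strong connectivity'' --- is simply false on surfaces of positive genus. Take the $n\times n$ grid on the torus with all horizontal edges directed east and all vertical edges directed north: this orientation is strongly connected, no face is ccw (each face has two cw edges), and the dual walk that repeatedly crosses horizontal edges from south (right face) to north (left face) closes up around the torus without producing any edge cut, because the dual curve is non-separating. So the implication you rely on needs a separating curve, and your proposed rescue --- confining the walk to the maximal-potential region $C$ whose boundary consists of ccw directed cycles --- does not supply one: nothing in the hypotheses prevents $C$ from having positive genus, in which case a dual walk confined to $C$ can still be non-separating, and a shear-like pattern inside a handle of $C$ is not excluded by anything you have proved. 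You flag this point yourself as ``the hardest to pin down cleanly,'' but flagging it does not fill it; ruling out such configurations (this is where the null-homology of $\phi(D'-D)$ must genuinely be used, beyond the mere existence of the region $C$) is precisely the nontrivial topological core of the lemma as established in \cite{Goncalves}, and as it stands your proof establishes the lemma only in the case where the relevant region is planar (e.g., the sphere/plane case), not for general surface graphs.
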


Let
$$z_{\min}=\min\{z_{D'-D}(F):F\in{\cal F}(G)\}$$
and let $F_{\min}$ be a facial circuit for which $z_{D'-D}(F_{\min})=z_{\min}$.

\begin{thm}\label{main} Given  a  forbidden facial circuit class ${\cal C}$, an $\alpha$-orientation $D$ of a surface graph can be transformed from $D'$ by a sequence of ${\cal C}$-forbidden flips if and only if $D$ and $D'$ are homologous and $z_{D'-D}(F)=z_{\min}$ for every $F\in{\cal C}$. Moreover, the minimum number of flips needed to transform $D'$ into $D$ equals
\begin{equation}
d(D',D)=\sum\limits_{F\in {\cal F}}(z_{D'-D}(F)-z_{\min}).
\end{equation}
\end{thm}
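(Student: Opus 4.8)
The plan is to reduce everything to a single identity describing the effect of one flip on the vector $\phi(\,\cdot-D)$, and then to read off both the homological condition and the flip count from Propositions \ref{coordinate} and \ref{potential} together with Lemma \ref{base}. First I would record the basic computation. Fixing the reference orientation $D_0$, for any two $\alpha$-orientations one checks that $\phi(D_1-D_2)=\tfrac12(\phi(D_1)-\phi(D_2))$, since the two sides agree on edges where $D_1,D_2$ coincide and $\phi(D_1)_e=-\phi(D_2)_e$ elsewhere. Now suppose $\tilde D$ is any orientation in which a facial circuit $F^\ast$ is ccw, and let $\tilde D_{\mathrm{new}}$ be obtained by flipping $F^\ast$. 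Reversing the ccw edges of $F^\ast$ gives $\phi(\tilde D_{\mathrm{new}})=\phi(\tilde D)-2\phi(F^\ast)$, whence $\phi(\tilde D_{\mathrm{new}}-D)=\phi(\tilde D-D)-\phi(F^\ast)$ for any target $D$. Telescoping along a flip sequence $D'=\tilde D_0\to\cdots\to\tilde D_t=D$ and using $\phi(D-D)=0$, I obtain $\phi(D'-D)=\sum_{F\in\mathcal F}n_F\,\phi(F)$, where $n_F\ge 0$ is the number of times $F$ is flipped. By Proposition \ref{coordinate}, comparing this with the representation $\phi(D'-D)=\sum_F z_{D'-D}(F)\phi(F)$ of Proposition \ref{potential} forces $n_F=z_{D'-D}(F)+k$ for a single constant $k$ independent of $F$. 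This one relation is the engine for the whole proof.

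For necessity, the existence of a $\mathcal C$-forbidden flip sequence gives $\phi(D'-D)=\sum_F n_F\phi(F)\in\mathbb F$, so $D$ and $D'$ are homologous and Proposition \ref{potential} applies. Since $n_F\ge0$ for all $F$, we get $z_{D'-D}(F)\ge -k$ everywhere, i.e. $z_{\min}\ge -k$; since the flips avoid $\mathcal C$ we have $n_F=0$, hence $z_{D'-D}(F)=-k$, for every $F\in\mathcal C$. As $z_{D'-D}(F)\ge z_{\min}$ always, the value $-k$ satisfies $-k\ge z_{\min}$, and the two inequalities force $-k=z_{\min}$. Thus $z_{D'-D}(F)=z_{\min}$ on $\mathcal C$, as required.

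For sufficiency I would choose the reference circuit $F_0$ to be a minimizer of $z_{D'-D}$ lying in $\mathcal C$ (any minimizer if $\mathcal C=\emptyset$; note the hypothesis makes every circuit of $\mathcal C$ a minimizer). Renormalizing $z$ by Proposition \ref{coordinate} so that $z_{D'-D}(F_0)=0$ makes all coefficients nonnegative, giving $\phi(D'-D)=\sum_{F\ne F_0}z_{D'-D}(F)\phi(F)$ with $z_{D'-D}(F)\ge0$. Lemma \ref{base} then yields a sequence of $\{F_0\}$-forbidden flips transforming $D'$ into $D$. The point is that this sequence is automatically $\mathcal C$-forbidden: applying the engine relation to it, the equality $n_{F_0}=0$ pins down $k=-z_{\min}$, so the per-face flip count is the sequence-independent quantity $n_F=z_{D'-D}(F)-z_{\min}$; in particular $n_F=0$ for every $F\in\mathcal C$, so no circuit of $\mathcal C$ is ever flipped.

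The same bookkeeping yields the extremal formula, since every flip sequence from $D'$ to $D$ has length $\sum_F n_F=\sum_F\bigl(z_{D'-D}(F)+k\bigr)=d(D',D)+|\mathcal F|\,(k+z_{\min})$, which is at least $d(D',D)$ because $n_F\ge0$ forces $k\ge -z_{\min}$, with equality realized by the sequence constructed above (where $k=-z_{\min}$). I expect the main obstacle to be precisely this upgrade from a $\{F_0\}$-forbidden sequence to a genuinely $\mathcal C$-forbidden one; the realization that resolves it is that the total number of flips performed on each individual face is an invariant determined by $z_{D'-D}$ and $z_{\min}$ alone, so that forbidding a single minimizing face already forbids every minimizing face for free.
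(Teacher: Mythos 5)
Your proof is correct and shares its skeleton with the paper's: both hinge on the per-face flip count (your $n_F$, the paper's $t(F)$) differing from $z_{D'-D}$ by a single constant, both pin down that constant via nonnegativity together with vanishing on $\mathcal{C}$, and both obtain sufficiency from Lemma \ref{base} applied with $F_0=F_{\min}$, ending with the same key realization that forbidding one minimizing face automatically forbids every face of $\mathcal{C}$. Where you genuinely diverge is in how the counting relation is derived. The paper argues edge-locally: for each edge $e$ shared by faces $F,F'$ it analyzes how the flips in the sequence alter the orientation of $e$, obtaining the recursion $t(F)=t(F')\pm 1$ or $t(F)=t(F')$ matching (\ref{pt}), and then propagates the relation through the connected dual graph. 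You instead work globally in the edge space, using $\phi(D_1-D_2)=\tfrac12\bigl(\phi(D_1)-\phi(D_2)\bigr)$ and $\phi(\tilde D_{\mathrm{new}})=\phi(\tilde D)-2\phi(F^\ast)$ to telescope along the sequence to $\phi(D'-D)=\sum_{F}n_F\,\phi(F)$, then invoking the uniqueness-up-to-shift statement of Proposition \ref{coordinate}. Your route has a real side benefit: it establishes homology ($\phi(D'-D)\in\mathbb{F}$) \emph{before} ever touching $z_{D'-D}$, whereas the paper's necessity argument invokes Proposition \ref{potential} --- stated only for homologous pairs --- at a point where homology is still being proved, a mild circularity that must be repaired by a direct edge-by-edge verification that $\sum_F t(F)\phi(F)=\phi(D'-D)$; your telescoping supplies exactly that verification for free. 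Your treatment of the distance formula is also marginally stronger than the paper's, since your lower bound $\sum_F n_F\geq\sum_F\bigl(z_{D'-D}(F)-z_{\min}\bigr)$ is proved for arbitrary flip sequences from $D'$ to $D$, not only for $\{F_{\min}\}$-forbidden ones, with equality exhibited by the constructed $\mathcal{C}$-forbidden sequence.
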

\begin{proof} Assume firstly that  $D$ can be transformed from $D'$ by a sequence ${\cal S}$ of ${\cal C}$-forbidden flips.

For a facial circuit $F\in{\cal F}$, consider the number $t(F)$ of the flips in ${\cal S}$  taking on $F$. Let $e$ be an edge on $F$ and let $F'$ be the facial circuit which shares $e$ with $F$. If $e\in D'-D$, then the orientation of $e$ changes after ${\cal S}$ being taken. Moreover, a flip to change the orientation of $e$ must take on the facial circuit which lies to the left side of $e$. This implies that $t(F)=t(F')+1$ if  $F$  lies to the left  side of $e$ or $t(F)=t(F')-1$  if $F'$ lies to the left  side of $e$. If $e\notin D'-D$, then the orientation of $e$ does not change after ${\cal S}$ being taken. This means that, if a flip in ${\cal S}$ takes on $F$ then there must be another flip that takes on $F'$ to keep the orientation of $e$ invariant and, hence,  $t(F)=t(F')$. As a result, we have the following relation:
\begin{equation}\label{t}
t(F)=
\begin{cases}
 \ t(F')+1,& \text{if\ $e\in D'-D$\ and\ $F$\  lies\ to\ the\ left\  side\ of\ $e$},\\
 \ t(F')-1,& \text{if\ $e\in D'-D$\ and\ $F'$\ lies\ to\ the\ left\  side\ of\ $e$},\\
 \ t(F'),& \text{if\ $e\notin D'-D$}.
\end{cases}
\end{equation}

Combining (\ref{t}) with (\ref{pt}), we have $t(F)-t(F')=z_{D'-D}(F)-z_{D'-D}(F')$, i.e., $t(F)-z_{D'-D}(F)=t(F')-z_{D'-D}(F')$. Since the dual graph of $G$ is connected, we have
\begin{equation}\label{tz}
t(F)-z_{D'-D}(F)=t(K)-z_{D'-D}(K)
\end{equation}
 for any $K\in {\cal F}$.  So by Proposition \ref{coordinate} and Proposition \ref{potential}, 
$$\phi(D'-D)=\sum_{F\in\mathcal{F}}t(F) \phi(F).$$
Hence, $D$ and $D'$ are homologous.

Further, from (\ref{tz}) we can also see that $t(K)$ attains the minimum value if and only if $z_{D'-D}(K)$ does, i.e., $z_{D'-D}(K)=z_{\min}$ and $K=F_{\min}$. On the other hand, notice that $t(F)\geq 0$ for every $F\in{\cal F}$ and the minimum value 0 of $t(F)$ is attained by every $F\in{\cal C}$ since the facial circuits in ${\cal C}$ do not involve any flip in ${\cal S}$. Hence, $t(F_{\min})=t(F)=0$ for every $F\in{\cal C}$. Therefore,
 \begin{equation}
 z_{D'-D}(F)=z_{D'-D}(F_{\min})=z_{\min}
  \end{equation}
for every $F\in{\cal C}$.

 Conversely, assume that $D$ and $D'$ are homologous and  $z_{D'-D}(F)=z_{\min}$  for any $F\in{\cal C}$. Then by Proposition  \ref{potential} and Proposition \ref{coordinate}, we have
\begin{eqnarray}
\phi(D'-D)
&=&\sum_{F\in\mathcal{F}}z_{D'-D}(F) \phi(F)\nonumber\\
&=&\sum_{F\in{\mathcal F}}(z_{D'-D}(F)-z_{D'-D}(F_{\min}))\phi(F)\nonumber\\
&=&\sum_{F\in{\mathcal F}\setminus\{F_{\min}\}}(z_{D'-D}(F)-z_{D'-D}(F_{\min}))\phi(F)\nonumber.
\end{eqnarray}

Since $z_{D'-D}(F) -z_{D'-D}(F_{\min})=z_{D'-D}(F) -z_{\min}\geq 0$ for any $F\in{\mathcal F}\setminus\{F_{\min}\}$, so by Lemma \ref{base},  $D$ can be transformed from $D'$ by a sequence of $\{F_{\min}\}$-forbidden flips.

Let ${\cal S}$ be an arbitrary sequence of $\{F_{\min}\}$-forbidden  flips that transform $D'$ into $D$.  For any facial circuit $F\in{\cal F}$, consider the number $t(F)$ of flips in ${\cal S}$  taking on $F$. By the definition of  ${\cal S}$, we have $t(F_{\min})=0$.

Similar to the proof of the necessity,  (\ref{tz}) is also satisfied.  Set $K=F_{\min}$ in  (\ref{tz}). 
Since $t(F_{\min})=0$ and $z_{D'-D}(F)=z_{\min}$  for every $F\in{\cal C}$, we have $t(F)=t(F_{\min})=0$ for every $F\in{\cal C}$. This implies that ${\cal S}$ contains no flip taking on a facial circuit in ${\cal C}$. That is,  ${\cal S}$ is a ${\cal C}$-forbidden sequence.

Finally, by the arbitrariness of the sequence ${\cal S}$, we may assume that ${\cal S}$ is shortest. Thus, by (\ref{tz}) and the fact that $t(F_{\min})=0$, we have
\begin{eqnarray}
d(D',D)
&=&\sum_{F\in {\cal F}}t(F)=\sum_{F\in {\cal F}}(t(F)-t(F_{\min}))\nonumber\\
&=&\sum_{F\in {\cal F}}(z_{D'-D}(F)-z_{\min}).\nonumber
\end{eqnarray}

This completes our proof. \end{proof}

\begin{cor}\label{transform} Let $D$ and $D'$ be two $\alpha$-orientations of a surface graph. Then $D$ can be transformed from $D'$ by a sequence of flips if and only if $D$ and $D'$ are homologous.
\end{cor}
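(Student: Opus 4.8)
The plan is to obtain this statement as the degenerate case ${\cal C}=\emptyset$ of Theorem~\ref{main}, so essentially no new argument is needed beyond a careful reading of the definitions. First I would observe that a ${\cal C}$-forbidden flip is by definition a flip that does not take on any facial circuit belonging to ${\cal C}$; when ${\cal C}=\emptyset$ there is no restriction at all, so a $\emptyset$-forbidden flip is simply an arbitrary flip. Consequently, ``$D$ can be transformed from $D'$ by a sequence of flips'' is literally the same assertion as ``$D$ can be transformed from $D'$ by a sequence of $\emptyset$-forbidden flips.''

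Next I would apply Theorem~\ref{main} with ${\cal C}=\emptyset$. That theorem asserts the transformability is equivalent to the conjunction of two conditions: that $D$ and $D'$ are homologous, and that $z_{D'-D}(F)=z_{\min}$ for every $F\in{\cal C}$. The key point is that the second condition is a universally quantified statement over $F\in{\cal C}$, and with ${\cal C}=\emptyset$ it is vacuously true. Hence the biconditional in Theorem~\ref{main} collapses to the single remaining requirement that $D$ and $D'$ be homologous, which is exactly the claim of the corollary.

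Since this is a direct specialization, there is no genuine obstacle to overcome; the only point demanding a moment of care is confirming that the empty-quantifier convention is what is intended, i.e.\ that imposing no forbidden circuits removes the potential-minimality constraint entirely rather than, say, forcing $z_{\min}$ to be attained everywhere. I would state the one-line argument explicitly: apply Theorem~\ref{main} to ${\cal C}=\emptyset$, note that the condition ``$z_{D'-D}(F)=z_{\min}$ for every $F\in{\cal C}$'' holds vacuously, and conclude. If desired, I could additionally remark that the minimum number of flips is still given by the same formula $d(D',D)=\sum_{F\in{\cal F}}(z_{D'-D}(F)-z_{\min})$, which likewise carries over unchanged, though this is not needed for the stated equivalence.
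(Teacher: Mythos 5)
Your proposal is correct and matches the paper's argument in substance: both are one-line specializations of Theorem~\ref{main}, and the vacuous-quantifier reading of the condition ``$z_{D'-D}(F)=z_{\min}$ for every $F\in{\cal C}$'' when ${\cal C}=\emptyset$ is exactly what the theorem intends (its proof nowhere requires ${\cal C}\neq\emptyset$). The only cosmetic difference is that for the sufficiency direction the paper instead instantiates ${\cal C}=\{F_{\min}\}$, using that $z_{D'-D}(F_{\min})=z_{\min}$ holds by definition, whereas you take ${\cal C}=\emptyset$ throughout; both yield the same conclusion since a $\{F_{\min}\}$-forbidden flip sequence is in particular a sequence of flips.
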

\begin{proof} Recall that $z_{D'-D}(F_{\min})=z_{\min}$. So by Theorem \ref{main}, if $D$ and $D'$ are homologous then $D$ can be transformed from $D'$ by a sequence of $\{F_{\min}\}$-forbidden flips. The converse is also true by Theorem \ref{main}.
\end{proof}

We now give an extension of Lemma \ref{base} as follows.

\begin{cor} Let $D$ and $D'$ be two $\alpha$-orientations of a  surface graph and  ${\cal C}$ be a class of facial circuits. If
\begin{equation}\label{ext}
\phi(D'-D)=\sum_{F\in\mathcal{F}\setminus{\cal C}}\lambda_F \phi(F),
\end{equation}
 then $D$ can be transformed from $D'$ by a sequence of ${\cal C}$-forbidden flips if and only if $\lambda_F\geq 0$ for all $F\in\mathcal{F}\setminus{\cal C}$.
\end{cor}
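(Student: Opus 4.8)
The plan is to reduce everything to Theorem \ref{main} by translating the given coefficients $\lambda_F$ into values of the potential function $z_{D'-D}$. First I would note that the hypothesis (\ref{ext}) already places $\phi(D'-D)$ inside $\mathbb{F}$, so $D$ and $D'$ are homologous; this is exactly what is needed before invoking either Proposition \ref{potential} or Theorem \ref{main}.

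Next I would extend the representation (\ref{ext}) to a sum over all of ${\cal F}$ by declaring $\lambda_F=0$ for every $F\in{\cal C}$, so that $\phi(D'-D)=\sum_{F\in{\cal F}}\lambda_F\phi(F)$. On the other hand, Proposition \ref{potential} gives $\phi(D'-D)=\sum_{F\in{\cal F}}z_{D'-D}(F)\phi(F)$. Comparing these two expansions through the converse part of Proposition \ref{coordinate} produces a single integer $k$ with $z_{D'-D}(F)=\lambda_F+k$ for all $F\in{\cal F}$; in particular $z_{D'-D}(F)=k$ for every $F\in{\cal C}$, so the potential is constant on ${\cal C}$. This identity is the bridge that converts the sign condition on the $\lambda_F$ into the minimality condition appearing in Theorem \ref{main}.

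For the forward implication I would assume a ${\cal C}$-forbidden flip sequence transforms $D'$ into $D$ and apply Theorem \ref{main}, which forces $z_{D'-D}(F)=z_{\min}$ for every $F\in{\cal C}$. Combined with $z_{D'-D}(F)=k$ on ${\cal C}$, this pins down $k=z_{\min}$, whence $\lambda_F=z_{D'-D}(F)-z_{\min}\ge 0$ for all $F\in{\cal F}\setminus{\cal C}$. For the converse I would assume $\lambda_F\ge 0$ off ${\cal C}$, so that $z_{D'-D}(F)=\lambda_F+k\ge k$ there, while $z_{D'-D}(F)=k$ on ${\cal C}$; hence the global minimum $z_{\min}$ equals $k$ and is attained on ${\cal C}$, giving $z_{D'-D}(F)=z_{\min}$ for every $F\in{\cal C}$. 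Theorem \ref{main} then delivers the desired ${\cal C}$-forbidden flip sequence.

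The delicate point is the identification $k=z_{\min}$, that is, verifying that the global minimum of the potential is actually attained on ${\cal C}$; this is precisely where nonemptiness of ${\cal C}$ enters, since for ${\cal C}=\emptyset$ the extended coefficients are determined only up to the additive constant $k$ and the statement collapses to Corollary \ref{transform}. I expect no other genuine obstacle, as the remaining steps are direct substitutions into identities already established in Propositions \ref{coordinate} and \ref{potential}.
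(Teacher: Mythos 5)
Your proposal is correct and follows essentially the same route as the paper's own proof: extend the representation by setting $\lambda_F=0$ on ${\cal C}$, use Propositions \ref{potential} and \ref{coordinate} to get $z_{D'-D}(F)=\lambda_F+k$, and then invoke Theorem \ref{main} after identifying $k=z_{\min}$ as attained on ${\cal C}$. You in fact spell out the ``only if'' direction more explicitly than the paper (which just says the corollary follows from Theorem \ref{main}), and your observation that nonemptiness of ${\cal C}$ is needed to pin down the additive constant---so that the ``only if'' half degenerates when ${\cal C}=\emptyset$---is a valid caveat that the paper's proof also implicitly assumes.
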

\begin{proof}We write (\ref{ext}) as 
\begin{equation}\label{rew}
\phi(D'-D)=\sum_{F\in\mathcal{F}}\lambda_F \phi(F),
\end{equation}
 where $\lambda_F=0$ if $F\in{\cal C}$. By Proposition \ref{potential}, we have
$$\phi(D'-D)=\sum_{F\in\mathcal{F}}z_{D'-D}(F) \phi(F).$$
So by Proposition \ref{coordinate},  there exists an integer $k$
   such that $z_{D'-D}(F) = \lambda_F + k$ for all $F \in \mathcal{F}$. Further, the condition $\lambda_F\geq 0$ for all $F\in\mathcal{F}\setminus{\cal C}$ implies that  the minimum value of $\lambda_F$ in (\ref{rew}) is 0 and attained by every $F\in{\cal C}$. Hence, the minimum value $z_{\min}$ of $z_{D'-D}(F)$ is attained by  every $F\in{\cal C}$. The corollary follows directly from Theorem \ref{main}.
   \end{proof}

\section{Flip graph of $\alpha$-orientations}
For  a class ${\cal C}$ of forbidden facial circuits of a surface graph $G$, we define the {\it ${\cal C}$-forbidden flip graph} of $G$ as a directed graph, denoted by  ${\bf D}({\cal C})$, whose vertex set is the class of all $\alpha$-orientations of $G$, and two orientations $D$ and $D'$ are joined by a directed edge from $D'$ to $D$ provided $D$ can be transformed from $D'$ by a ${\cal C}$-forbidden flip, or equivalently, $D'-D$ is a ccw facial circuit and  $D'-D\notin {\cal C}$. Since the homology is an equivalence relation,  the $\alpha$-orientations of $G$ can be partitioned into several equivalence classes, say $\Omega_1,\Omega_2,\dotsc,\Omega_q$. For  $k\in\{1,2,\dotsc,q\}$, let ${\bf D}_k({\cal C})$ be the subgraph of ${\bf D}({\cal C})$ induced by $\Omega_k$. If ${\cal C}$ consists of exactly one facial circuit, then by Theorem \ref{Propp}, ${\bf D}_k({\cal C})$ is the cover graph of a distributive lattice.

In this section we consider the general case, in which the forbidden class ${\cal C}$ consists of any number of facial circuits (in particular,  ${\cal C}$ may be empty). To this end, we apply the technique of U-coloring and L-coloring, which was introduced by Felsner and Knauer to characterize the cover graph of distributive lattices.

For a directed graph $D=(V,E)$ and two vertices $u,v\in V$, we use $(u,v)$ to denote the directed edge joining $u$ and $v$ with direction from $u$ to $v$. An edge coloring $c$ of $D$ is called a {\it U-coloring}  if for every $u, v,w\in V$ with $u\not=w$ and $(v, u), (v,w)\in E$, the following two rules hold \cite{Felsner3}:\\
(U$_1$). $c(v, u)\not= c(v,w)$;\\
(U$_2$). There is a vertex $z\in V$ and edges $(u, z), (w, z)$  such that $c(v, u) = c(w, z)$ and $c(v,w) = c(u, z)$.

An L-coloring is defined as the dual, in the sense of edge direction reversal, to a U-coloring, that is, every directed edge $(i,j)$ in the definition of U-coloring is replaced by $(j,i)$.
\begin{prop}\label{UL}\cite{Felsner3} If a  finite connected acyclic digraph $D$ admits a U- and an L-coloring then $D$ is the cover graph of a distributive lattice, and has a unique sink and a unique source.
\end{prop}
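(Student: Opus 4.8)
The plan is to read $D$ as the diagram of its own reachability order and then let the two colorings supply the join- and meet-structure. Since $D$ is finite and acyclic, put $x\le y$ whenever a directed path runs from $x$ to $y$; this is a partial order $P=(V,\le)$, and I want to show $P$ is a distributive lattice whose Hasse diagram is exactly $D$. The first observation is that (U$_2$) is nothing but local confluence for the one-step relation ``move up along an edge'': two upward steps out of $v$ reconverge at a common vertex $z$. Finiteness and acyclicity give termination, so Newman's lemma promotes this to global confluence, namely any two elements lying above a common element have a common upper bound. A short argument using the undirected connectivity of $D$ (adjacent vertices share the top of their principal filters) then forces a single greatest element $\hat 1$, the unique sink. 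Running the same reasoning on the L-coloring, which is the edge-reversal dual, produces a least element $\hat 0$, the unique source.

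The heart of the argument is to show that the U-coloring makes $P$ an \emph{upper locally distributive} lattice with $D$ as its diagram. Here the guiding picture is to think of a cover edge as labelled by the join-irreducible that is ``added'' along it: then (U$_1$) says distinct upper covers of an element receive distinct labels, and (U$_2$) says any two upper covers span a commuting square. Iterating (U$_2$) over all the upper covers of a fixed $x$, I would build the interval between $x$ and the join of its upper covers and show it is a Boolean cube, which is exactly the upper local distributivity condition, with the joins realised as the apex vertices of these reconvergent diamonds. The coloring conditions also pin down that $D$ is the diagram and not merely a transitive reduct: (U$_1$) together with the diamonds of (U$_2$) prevents an edge from being short-circuited by a longer directed path, so every edge of $D$ is a cover of $P$ and every cover is realised by an edge.

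Feeding the L-coloring through the dual of this argument shows that $P$ is simultaneously \emph{lower locally distributive}. A classical characterization (Dilworth) states that a finite lattice is distributive precisely when it is both upper and lower locally distributive, so combining the two colorings yields that $P$ is a distributive lattice; being a finite distributive lattice, its bottom and top are exactly the $\hat 0$ and $\hat 1$ already identified as the source and sink. Hence $D$ is the cover graph of a distributive lattice with a unique source and a unique sink, as claimed.

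The step I expect to be the real obstacle is the cube-closing in the U-coloring argument: upgrading the pairwise commuting squares of (U$_2$) to a genuine $k$-dimensional Boolean interval when a vertex has $k$ distinct upper covers, with no unexpected identifications among the $2^k$ vertices, and verifying that the apex produced this way is honestly the least upper bound in $P$. This is precisely where finiteness, acyclicity, and the color-injectivity (U$_1$) all have to be used in concert. A secondary, more bookkeeping-level concern is ensuring that the lattice order extracted from the U-coloring and the one extracted from the L-coloring are the same order; I would sidestep this by fixing the order once and for all as reachability in $D$ and using each coloring only to certify a property of that single order.
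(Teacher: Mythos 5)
The paper does not actually prove this proposition: it is imported verbatim from Felsner and Knauer \cite{Felsner3}, so there is no internal proof to compare against. Your outline is, in substance, a reconstruction of the Felsner--Knauer argument: U-coloring $\Rightarrow$ diagram of an upper locally distributive (ULD) lattice with unique sink, L-coloring dually, and then the classical fact (Dilworth/Avann) that a finite lattice is distributive if and only if it is both upper and lower locally distributive. The confluence part is fine as you state it: (U$_2$) is local confluence, finiteness plus acyclicity give termination, Newman's lemma gives confluence, and undirected connectivity then forces a unique sink.

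The obstacle you flag at the end is the genuine crux, and as written your proposal leaves it open: Newman's lemma only produces \emph{common} upper bounds, not \emph{least} ones, and iterating the squares of (U$_2$) to close a $k$-cube without unwanted identifications is exactly the fiddly step. The way Felsner and Knauer discharge it is worth knowing, since it is cleaner than direct cube-closing: using (U$_1$), (U$_2$) and induction toward the unique sink, one first proves that the \emph{multiset of colors} along a directed path from $x$ to $y$ is independent of the path. This single invariance lemma settles everything at once: it shows every edge of $D$ is a cover in the reachability order (an edge $(u,v)$ together with a longer $u$--$v$ path would give two $u$--$v$ paths with color multisets of different sizes, which is a contradiction --- note that your claim that (U$_1$) and one application of (U$_2$) already ``prevent short-circuits'' does not go through directly, as completing squares around a triangle just spawns new vertices and needs this same induction); it makes $v\mapsto$ (color multiset from $v$ to the sink) an embedding into $\mathbb{N}^{\text{colors}}$ under which the common upper bounds from confluence can be compared, yielding actual joins and the ULD property. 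So: right strategy, same route as the cited source, but the proof is not complete until the path-invariance lemma (or an equivalent cube-closing induction) is carried out; your instinct that this is where finiteness, acyclicity and (U$_1$) must all be used together is accurate.
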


We now turn to our flip graph. Let ${\cal F}=\{F_1,F_2,\dotsc,F_{|{\cal F}|}\}$ be the class of all facial circuits of $G$. An edge of ${\bf D}(\emptyset)$ is called of type $F_i$ if it corresponds to a flip taking on $F_i$. Let ${\bf E}_i$ be the set of the edges of type $F_i$ and let ${\bf c}:E({\bf D}(\emptyset))\rightarrow \{1,2,\dotsc,|{\cal F}|\}$ be the edge coloring of ${\bf D}(\emptyset)$ such that ${\bf c}({\bf e})=i$ for each ${\bf e}\in {\bf E}_i, i=1,2,\dotsc, |{\cal F}|$.
\begin{prop}\label{U-coloring} ${\bf c}$ is both a U-coloring and an L-coloring.
\end{prop}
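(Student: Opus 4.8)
The plan is to verify the two axioms (U$_1$) and (U$_2$) of a U-coloring directly for ${\bf c}$ and then obtain the L-coloring by the edge-reversal duality built into the definition. Throughout I use the dictionary that an outgoing edge at an orientation $v$ of color $i$ corresponds to flipping a ccw facial circuit $F_i$ of $v$ (so that $v-u=F_i$ is ccw in $v$), while an incoming edge at $v$ of color $i$ corresponds to reversing a cw facial circuit $F_i$ of $v$, i.e.\ the inverse of a flip.

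First I would dispose of (U$_1$). Suppose $(v,u),(v,w)\in E({\bf D}(\emptyset))$ with $u\neq w$, of colors $i$ and $j$. Since flipping a prescribed ccw facial circuit of $v$ produces a uniquely determined orientation, two distinct out-neighbours $u\neq w$ must come from flips on two distinct facial circuits $F_i\neq F_j$, whence ${\bf c}(v,u)=i\neq j={\bf c}(v,w)$; this is exactly (U$_1$). (The same uniqueness shows that at most one outgoing edge of each color leaves any vertex, and dually for incoming edges, so (L$_1$) will follow identically.)

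The heart of the argument, and the step I expect to be the real obstacle, is the observation that two distinct facial circuits that are simultaneously ccw (respectively, simultaneously cw) in one orientation must be edge-disjoint. Indeed, if $F_i$ and $F_j$ shared an edge $e$, then by our standing assumption $e$ lies on exactly two distinct faces, so $F_i$ and $F_j$ are precisely the two facial circuits meeting at $e$, and they traverse $e$ in opposite directions when both are oriented ccw — this is the fact $\phi(F_i)_e=-\phi(F_j)_e$ already used in the proof of Proposition \ref{coordinate}. Consequently, with respect to the fixed direction of $e$ in the orientation $v$, at most one of $F_i,F_j$ can have $e$ oriented consistently with its ccw traversal, so the two circuits cannot both be ccw in $v$; the identical bookkeeping with cw traversals rules out both being cw. Thus the entire difficulty reduces to this single-edge orientation check.

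With edge-disjointness in hand, (U$_2$) becomes routine. Given the two outgoing edges at $v$ of colors $i$ and $j$, the circuits $F_i,F_j$ are both ccw in $v$, hence edge-disjoint, so flipping one leaves every edge of the other unchanged. Therefore $F_j$ is still ccw in $u$ and $F_i$ is still ccw in $w$, and flipping the remaining circuit in each case yields the common orientation $z$ obtained from $v$ by reversing both $F_i$ and $F_j$. The edges $(u,z)$ and $(w,z)$ then carry colors $j$ and $i$, giving ${\bf c}(v,u)=i={\bf c}(w,z)$ and ${\bf c}(v,w)=j={\bf c}(u,z)$, which is (U$_2$). Finally I would treat the L-coloring symmetrically: two incoming edges at $v$ of colors $i$ and $j$ correspond to $F_i,F_j$ being both cw in $v$, which are again edge-disjoint by the observation above, so reversing both produces the common source $z$ and closes the dual commutative square. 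Hence ${\bf c}$ is both a U-coloring and an L-coloring.
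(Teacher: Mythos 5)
Your proof is correct and takes essentially the same approach as the paper's: uniqueness of the orientation produced by a flip gives (U$_1$), edge-disjointness of two simultaneously ccw facial circuits yields the commuting square for (U$_2$), and the L-coloring is handled by the dual argument. The only difference is cosmetic --- your single-edge check that two ccw facial circuits sharing an edge $e$ would traverse it in opposite directions (i.e.\ $\phi(F_i)_e=-\phi(F_j)_e$) simply spells out the independence claim the paper asserts in one line.
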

\begin{proof} Let $D_1,D_2,D_3\in V({\bf D}(\emptyset))$ with $D_2\not=D_3$ and $(D_1,D_2)\in {\bf E}_i, (D_1,D_3)\in {\bf E}_j$. Therefore, $D_2$ (resp., $D_3$) can be transformed from $D_1$ by a flip taking on the ccw facial circuit $F_i$ (resp., $F_j$). Since  $D_2\not=D_3$, we have $i\not=j$ and, hence, the rule U$_1$ holds. Further, since both $F_i$ and $F_j$ are ccw, they cannot share the same edge of $G$, that is, $F_i$ and $F_j$ are independent. Let $D_4$ be the orientation of $G$ transformed from $D_2$ by taking a flip on $F_j$. Then $D_4$ can also be transformed from $D_3$ by taking a flip on $F_i$. This means that $(D_3,D_4)\in {\bf E}_i,(D_2,D_4)\in {\bf E}_j$ and, therefore, the rule U$_2$ holds. The discussion for L-coloring is analogous.
\end{proof}

  For a set {\bf E} of edges in ${\bf D}(\emptyset)$, we denote by ${\bf D}(\emptyset)-{\bf E}$ the directed graph obtained from  ${\bf D}(\emptyset)$ by deleting all the edges in {\bf E}. By the definition of the edge coloring ${\bf c}$, the following proposition is obvious.
\begin{prop}\label{heredity1} For any $I\subseteq\{1,2,\dotsc,|{\cal F}|\}$,
 ${\bf c}$ restricted on the subgraph ${\bf D}(\emptyset)-\bigcup_{i\in I}{\bf E}_{i}$  is still a U-coloring and an L-coloring.
\end{prop}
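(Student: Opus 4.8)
The plan is to verify that removing an entire collection of color classes leaves both defining properties of a U-coloring intact, working directly from the conditions (U$_1$) and (U$_2$) rather than re-examining the flip structure. Write ${\bf D}'={\bf D}(\emptyset)-\bigcup_{i\in I}{\bf E}_i$, and record the single guiding observation: by the definition of the coloring ${\bf c}$, an edge survives in ${\bf D}'$ precisely when its color lies outside $I$. Thus passing to ${\bf D}'$ is the same as retaining exactly the edges whose colors avoid $I$.

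First I would dispose of (U$_1$), which needs no new work. Given vertices $u,v,w$ with $u\neq w$ and both $(v,u),(v,w)\in E({\bf D}')$, these two edges are also present in ${\bf D}(\emptyset)$, so the inequality ${\bf c}(v,u)\neq {\bf c}(v,w)$ is inherited directly from Proposition \ref{U-coloring}. The only step carrying any content is (U$_2$). Starting from surviving edges $(v,u),(v,w)\in E({\bf D}')$, their colors satisfy ${\bf c}(v,u),{\bf c}(v,w)\notin I$ by the characterization above. Applying (U$_2$) in the full graph ${\bf D}(\emptyset)$ yields a vertex $z$ together with edges $(u,z),(w,z)$ for which ${\bf c}(w,z)={\bf c}(v,u)$ and ${\bf c}(u,z)={\bf c}(v,w)$. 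The crucial point is that these completing edges carry exactly the two colors ${\bf c}(v,u)$ and ${\bf c}(v,w)$, both of which lie outside $I$; hence neither $(u,z)$ nor $(w,z)$ is among the deleted edges, so both persist in ${\bf D}'$ and witness (U$_2$) there. The L-coloring case is identical after reversing all edge orientations and invoking the dual conditions.

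There is essentially no obstacle to overcome here, which is why the statement can be called obvious: the entire argument reduces to the remark that in (U$_2$) the completing edges merely repeat the colors of the initial pair, so a family of color classes that is closed under deletion automatically contains its own (U$_2$)-witnesses. In the write-up I would state this observation once, explicitly, and let both the U- and L-coloring conclusions follow from it without further computation.
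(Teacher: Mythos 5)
Your proof is correct and matches the paper's reasoning: the paper declares the proposition ``obvious'' from the definition of ${\bf c}$, and the implicit justification is precisely your key observation that deletion happens by whole color classes while the witnesses required by (U$_2$) carry only the two colors of the original pair, hence survive. You have simply written out in full the argument the paper leaves unstated, including the correctly inherited (U$_1$) and the dual L-coloring case.
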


\begin{prop}\label{contraction}  For any facial circuit class ${\cal C}=\{F_{i_1},F_{i_2},\dotsc,F_{i_s}\}$, we have
\begin{equation}\label{8}
{\bf D}( {\cal C})={\bf D}(\emptyset)- ({\bf E}_{i_1}\cup{\bf E}_{i_2}\cup\cdots\cup{\bf E}_{i_s}).
\end{equation}
\end{prop}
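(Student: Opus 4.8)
The plan is to prove equality of the two directed graphs by checking that they share the same vertex set and the same edge set, so the whole argument reduces to unwinding the two definitions. For the vertices there is nothing to do: by construction both ${\bf D}({\cal C})$ and ${\bf D}(\emptyset)$ have the class of all $\alpha$-orientations of $G$ as their vertex set, and deleting edges does not change the vertices, so $V({\bf D}({\cal C}))=V({\bf D}(\emptyset))=V\bigl({\bf D}(\emptyset)-({\bf E}_{i_1}\cup{\bf E}_{i_2}\cup\cdots\cup{\bf E}_{i_s})\bigr)$.

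It remains to match the edges, and first I would record the bookkeeping fact underlying the coloring ${\bf c}$: for any directed edge $(D',D)$ of ${\bf D}(\emptyset)$, the flipped facial circuit is exactly the oriented even subgraph $D'-D$, which is therefore a single ccw facial circuit, and this identifies its type ${\bf c}(D',D)$ unambiguously with the index $k$ for which $D'-D=F_k$. Equivalently, $(D',D)\in{\bf E}_{i_j}$ if and only if $D'-D=F_{i_j}$.

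With this in hand the two inclusions are immediate. For the forward inclusion, suppose $(D',D)$ is an edge of ${\bf D}({\cal C})$. By definition of a ${\cal C}$-forbidden flip, $D'-D$ is a ccw facial circuit with $D'-D\notin{\cal C}$; the first condition says $(D',D)$ is an edge of ${\bf D}(\emptyset)$, and the second, combined with the bookkeeping fact, says its type lies outside $\{i_1,\dotsc,i_s\}$, so $(D',D)\notin{\bf E}_{i_1}\cup{\bf E}_{i_2}\cup\cdots\cup{\bf E}_{i_s}$, whence $(D',D)$ survives in the deleted graph. For the reverse inclusion, if $(D',D)$ is an edge of ${\bf D}(\emptyset)$ lying in none of the ${\bf E}_{i_j}$, then $D'-D$ is a ccw facial circuit $F_k$ with $k\notin\{i_1,\dotsc,i_s\}$, i.e. $D'-D\notin{\cal C}$, which is precisely the condition for $(D',D)$ to be a ${\cal C}$-forbidden flip edge, so $(D',D)\in E({\bf D}({\cal C}))$.

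Since the proposition is essentially a restatement of the definitions, there is no genuine obstacle; the only point needing a word of care is the well-definedness of the edge type, namely that an edge $(D',D)$ of ${\bf D}(\emptyset)$ determines a \emph{unique} facial circuit $D'-D$, so that ``$(D',D)$ has type in ${\cal C}$'' and ``$D'-D\in{\cal C}$'' really are the same statement. This is exactly the consistency already used when ${\bf c}$ was introduced, so it may simply be invoked rather than reproved.
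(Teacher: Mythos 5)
Your proof is correct and follows essentially the same route as the paper's: both verify that the vertex sets coincide and then check, by unwinding the definitions of ${\cal C}$-forbidden flip and of edge type, that an edge of ${\bf D}(\emptyset)$ lies in some ${\bf E}_{i_j}$ exactly when it is absent from ${\bf D}({\cal C})$. Your version merely spells out the two inclusions and the well-definedness of the type ${\bf c}(D',D)$ via $D'-D$, details the paper leaves implicit.
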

\begin{proof} Notice that ${\bf D}( {\cal C})$ and ${\bf D}(\emptyset)- ({\bf E}_{i_1}\cup{\bf E}_{i_2}\cup\cdots\cup{\bf E}_{i_s})$  have the same vertex set. If an edge ${\bf e}$ is in some ${\bf E}_{i_j}$ then ${\bf e}$ is of type $F_{i_j}$ and therefore, ${\bf e}$ is not in  ${\bf D}( {\cal C})$ since $F_{i_j}$ is a forbidden facial circuit in ${\cal C}$. The converse is also true and thus (\ref{8}) follows.
\end{proof}

In contrast to the U- and L-colorings for the cover graph of a lattice, we will see that  ${\bf D}({\cal C})$ may be cyclic if ${\cal C}=\emptyset$, which gives a non-trivial example of cyclic directed graph that admits a U- and an L-coloring. As an example, let us consider the surface graph $G$ in Example 1. Recall that the equivalence classes, i.e., the homologous classes, of the $(2,2,2,2)$-orientations of $G$ are
$$\Omega_i=\{D_i\}\ \ {\rm for}\ i\in \{1,2,\dotsc,12\}\ \ {\rm and}\ \ \Omega_{13}=\{D_{13},D_{14},\dotsc,D_{18}\}.$$
This means that the flip graph ${\bf D}(\emptyset)$ of $G$ consists of 12 isolated vertices $D_1,D_2,\dotsc,D_{12}$ and the subgraph ${\bf D}_{13}(\emptyset)$ induced by $\Omega_{13}$, where  ${\bf D}_{13}(\emptyset)$ is illustrated as in Figure 4 and is cyclic. Further, by Proposition \ref{contraction}, we have ${\bf D}_{13}(\{F_4\})={\bf D}_{13}(\emptyset)- {\bf E}_{4}$ and ${\bf D}_{13}(\{F_1,F_4\})={\bf D}_{13}(\emptyset)- ({\bf E}_{1}\cup{\bf E}_{4})$. Moreover, ${\bf D}_{13}(\{F_4\})$ has one component and ${\bf D}_{13}(\{F_1,F_4\})$ has three components, each of which is the cover graph of a distributive lattice, see Figure 4.

\begin{center}
\scalebox{0.56}{\includegraphics{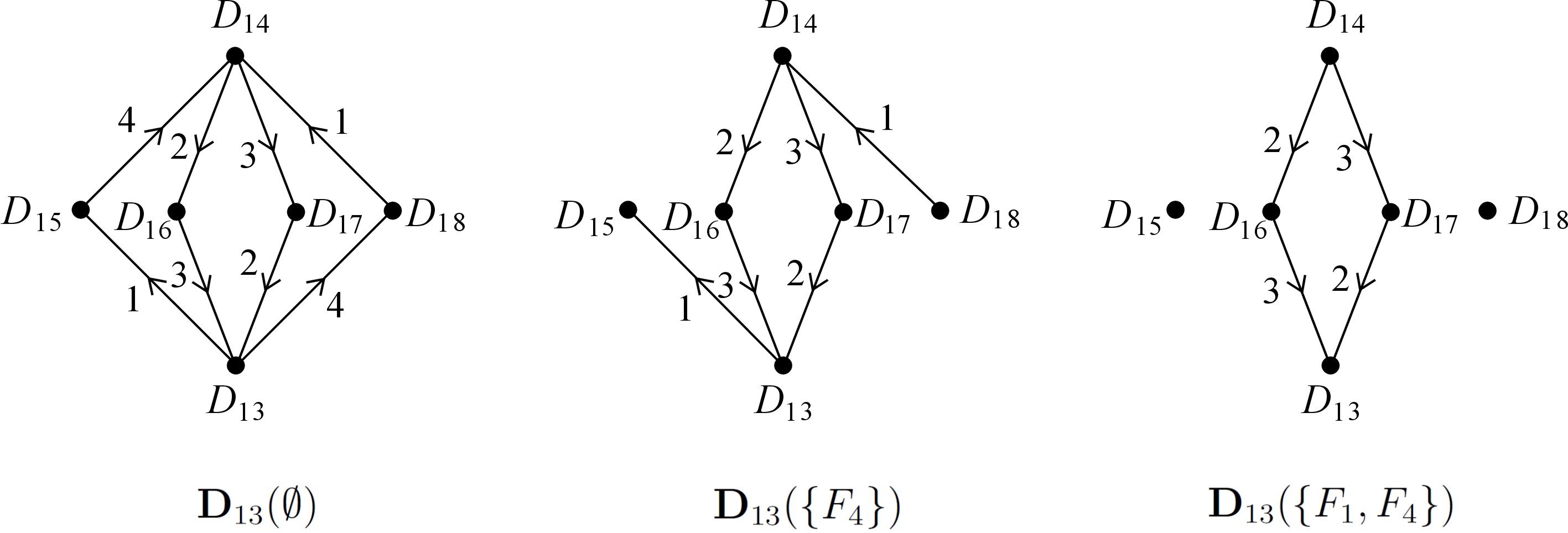}}\\
{\textbf{Figure\ 4}. For $i\in\{1,2,3,4\}$, the edges of type $F_i$ are represented by $i$.}
\end{center}

Recall that a directed graph $D$ is {\it strongly connected} if for any two vertices $u$ and $v$, there is a directed path from $u$ to $v$ and  a directed path from $v$ to $u$. The maximum integer $s$ for which $D\setminus H$ is strongly connected for any set $H$ of at most $s-1$ edges is called the {\it strong edge-connectivity} of $D$ and is denoted by $\kappa'(D)$.

For $k\in\{1,2,\ldots,q\}$, let $O_k(G,{\cal C})$ be the class of all the $\alpha$-orientations in $\Omega_k$ that has no ccw facial circuit other than that in ${\cal C}$ and let $O(G,{\cal C})=\bigcup_{k=1}^qO_k(G,{\cal C})$. Similarly, let $O^*_k(G,{\cal C})$ be the class of all the $\alpha$-orientations in $\Omega_k$ that has no cw facial circuit other than that in ${\cal C}$. We can see that, for any $D\in O_k(G,{\cal C})$, the orientation obtained from $D$ by reversing the directions of all edges of $D$ is in $O^*_k(G,{\cal C})$, and vise versa. This means that $|O_k(G,{\cal C})|=|O^*_k(G,{\cal C})|$. For instance, in the example above we have $O_{13}(G,\{F_1,F_4\})=\{D_{13},D_{15},D_{18}\}$ and $O^*_{13}(G,\{F_1,F_4\})=\{D_{14},D_{15},D_{18}\}$, see Figure 2. Further, we can see that $D_{13},D_{15},D_{18}$ and $D_{14},D_{15},D_{18}$ are exactly the three sinks and three sources of  ${\bf D}_{13}(\{F_1,F_4\})$, respectively, see Figure 4.
\begin{thm}\label{connectivty} Let $k\in\{1,2,\dotsc,q\}$.  \\
1). ${\bf D}_{k}(\emptyset)$ is strongly connected and $\kappa'({\bf D}_{k}(\emptyset))=1$;\\
2). For any ${\cal C}\subseteq{\cal F}$ with ${\cal C}\not=\emptyset$, ${\bf D}_{k}({\cal C})$ has exactly $|O_k(G,{\cal C})|=|O^*_k(G,{\cal C})|$ components, each of which is the cover graph of a distributive lattice with the unique sink in $O_k(G,{\cal C})$ and a unique source in $O^*_k(G,{\cal C})$.
\end{thm}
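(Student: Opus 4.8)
The plan is to derive both statements from the U-/L-coloring machinery (Propositions \ref{U-coloring}, \ref{heredity1} and \ref{contraction}) together with the counting identity (\ref{tz}) already established inside the proof of Theorem \ref{main}, the hinge being an acyclicity lemma that I would prove first. To begin with the strong connectivity in 1), I would note that a flip on $F$ changes an orientation by $\phi(F)\in\mathbb{F}$, so every edge of ${\bf D}(\emptyset)$ joins two homologous orientations and ${\bf D}(\emptyset)$ is the disjoint union of the pieces ${\bf D}_k(\emptyset)$. For $D,D'\in\Omega_k$, Corollary \ref{transform} supplies a flip sequence from $D'$ to $D$ and, by the symmetry of homology, one from $D$ to $D'$; read as directed paths, these show ${\bf D}_k(\emptyset)$ is strongly connected.

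The key lemma is that ${\bf D}_k(\mathcal{C})$ is acyclic whenever $\mathcal{C}\neq\emptyset$. I would take a directed cycle, regard it as a closed $\mathcal{C}$-forbidden flip sequence (so $D'=D$), and apply relation (\ref{tz}) from the proof of Theorem \ref{main}: since $z_{D-D}\equiv 0$, the number $t(F)$ of flips taking on each face $F$ is a constant independent of $F$. But every $F\in\mathcal{C}$ is never flipped, forcing that constant to be $0$, so the cycle is trivial. (As a sanity check, this is exactly why the cyclic example ${\bf D}_{13}(\emptyset)$ becomes acyclic once a face is forbidden.)

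With acyclicity in hand, part 2) follows from the coloring framework. Restricting Proposition \ref{contraction} to $\Omega_k$ gives ${\bf D}_k(\mathcal{C})={\bf D}_k(\emptyset)-\bigcup_{j}{\bf E}_{i_j}$, which by Proposition \ref{heredity1} still carries the coloring ${\bf c}$ as a U- and an L-coloring; this property is inherited by each connected component, since the completing vertex $z$ in rule U$_2$ is joined to $u,w$ and hence stays in the same component. Each component is therefore a finite connected acyclic digraph with a U- and an L-coloring, so Proposition \ref{UL} makes it the cover graph of a distributive lattice with a unique sink and a unique source. I would then identify a sink as an orientation of $\Omega_k$ admitting no non-forbidden outgoing flip, i.e.\ with no ccw facial circuit outside $\mathcal{C}$ --- precisely the members of $O_k(G,\mathcal{C})$; dually, a source has no cw facial circuit outside $\mathcal{C}$, i.e.\ lies in $O^*_k(G,\mathcal{C})$. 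Since each component has exactly one sink and every vertex lies in a unique component, the number of components equals the number of sinks, namely $|O_k(G,\mathcal{C})|=|O^*_k(G,\mathcal{C})|$.

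Finally I would return to $\kappa'({\bf D}_k(\emptyset))=1$ for a non-trivial $\Omega_k$. Fixing a single face $F$, acyclicity of ${\bf D}_k(\{F\})$ lets me flip non-$F$ ccw faces until reaching a sink $D$ of ${\bf D}_k(\{F\})$, an orientation with no ccw face other than possibly $F$; strong connectivity (non-triviality) forbids $D$ from being a sink of ${\bf D}_k(\emptyset)$, so $F$ is ccw and is the \emph{only} ccw face of $D$. Hence $D$ has a unique out-edge in ${\bf D}_k(\emptyset)$, and deleting it turns $D$ into a sink, destroying strong connectivity; combined with $\kappa'\geq 1$ this yields $\kappa'=1$. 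The main obstacle is the acyclicity lemma --- converting a directed cycle into the statement ``$t(F)$ is constant'' via (\ref{tz}) and extracting the contradiction from $\mathcal{C}\neq\emptyset$; the remainder is bookkeeping with the available propositions, the only care needed being that the U-/L-coloring restricts correctly to homology classes and to connected components.
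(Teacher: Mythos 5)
Your proof is correct and, for most of the theorem, follows the paper's own route: you establish acyclicity of ${\bf D}_k({\cal C})$ through the constancy of the flip-count $t(F)$ over the connected dual graph (the paper re-derives this count from scratch inside the theorem's proof, whereas you cite relation (\ref{tz}) from the proof of Theorem \ref{main} applied to a closed sequence with $D'=D$ and $z_{D-D}\equiv 0$ --- the same argument, just with a citation in place of a repetition), and you obtain part 2) exactly as the paper does, from Propositions \ref{U-coloring}, \ref{heredity1}, \ref{contraction} and \ref{UL}, counting components by their unique sinks and identifying the sinks (resp.\ sources) with $O_k(G,{\cal C})$ (resp.\ $O^*_k(G,{\cal C})$). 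Where you genuinely diverge is the step $\kappa'({\bf D}_k(\emptyset))=1$: the paper invokes Propp's Theorem \ref{Propp} to get the unique maximal element $D_{\max}$ of the lattice ${\cal L}(F)$, an orientation with no ccw facial circuit other than $F$, whereas you produce such an orientation from your own acyclicity lemma --- a maximal directed path in the finite acyclic digraph ${\bf D}_k(\{F\})$ must end at a sink. This buys self-containedness (the theorem no longer leans on Theorem \ref{Propp}, which the paper is in the business of generalizing) and reuses the acyclicity lemma you need for part 2) anyway. You are also more careful than the paper on one point: you restrict the claim $\kappa'=1$ to non-trivial $\Omega_k$, and this caveat is actually needed --- in Example 1 the classes $\Omega_1,\dotsc,\Omega_{12}$ are singletons, for which the literal claim fails under the paper's definition of $\kappa'$, and the paper's proof silently assumes non-triviality when asserting that $D_{\max}$ admits at least one ccw facial circuit. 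Your added remark that the completing vertex $z$ in rule U$_2$ stays within the component, so that Proposition \ref{UL} can legitimately be applied componentwise, fills a small gap the paper glosses over.
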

\begin{proof} 1). By Corollary \ref{transform}, ${\bf D}_{k}(\emptyset)$ is strongly connected and therefore, $\kappa'({\bf D}_{k}(\emptyset))\geq 1$. Let $F$ be an arbitrary facial circuit $F$ and let ${\cal L}(F)$ be the lattice  on the homology class $\Omega_k$ with forbidden circuit $F$, where two orientations $D$ and $D'$ have the relation $D'\leq D$ in ${\cal L}(F)$ if  $D$ can be transformed from $D'$ by a sequence of ${\cal C}$-forbidden flips. Then by Theorem \ref{Propp}, ${\bf D}_{k}(F)$ is the cover graph of ${\cal L}(F)$. Recall that ${\cal L}(F)$  has a unique maximal element. Moreover, this maximal element clearly corresponds to an orientation $D_{\max}$ of $G$ that contains no ccw facial circuit other than $F$ \cite{Goncalves}. On the other hand, since $\kappa'({\bf D}_{k}(\emptyset))\geq 1$,  $D_{\max}$ has at least one ccw facial circuit. Thus, $F$ itself must be ccw in $D_{\max}$. This means that $F$ is the unique ccw facial circuit in $D_{\max}$ and therefore, $\kappa'({\bf D}_{k}(\emptyset))=1$.

2). We first prove that ${\bf D}_{k}({\cal C})$ is acyclic. Suppose to the contrary that ${\bf D}_{k}({\cal C})$ has a directed cycle ${\bf C}=D_1D_2\cdots D_tD_1$. Without loss of generality, assume that the facial circuits sequence of the flips corresponding to ${\bf C}$ is ${\cal S}: F_1,F_2,\dotsc,F_t$. Since ${\bf C}$ is a directed cycle,  the direction of every edge in $D_1$ is not changed after taking the above flips. This means that, for every edge $e$  in $D_1$, the two facial circuits that share $e$ appear the same times in the sequence ${\cal S}$. Therefore,  every facial circuit in $D_1$ must appear the same times in ${\cal S}$ since the dual graph of $G$ is connected. This is a contradiction because the facial circuits in ${\cal C}$ are forbidden to any flip.

Finally, by Proposition \ref{U-coloring}, Proposition \ref{heredity1} and Proposition \ref{contraction}, ${\bf D}_{k}({\cal C})$ admits a U- and an L-coloring. So by Proposition \ref{UL}, each component of ${\bf D}_{k}({\cal C})$ is the cover graph of a distributive lattice (in particular, a single element) with a unique sink and a unique source. Moreover, if an orientation $D$ of $G$ is a sink of ${\bf D}_{k}({\cal C})$, then $D$ has no ccw facial circuit other than that in ${\cal C}$, meaning that $D\in O_k(G,{\cal C})$. The discussion for $D$ being the source is analogous. This completes our proof.
\end{proof}

\section*{Acknowledgments}
This work is supported by National Natural Science Foundation of China  under Grant No.\,11971406.

\end{document}